\newcounter{notecounter}
\def\Z{\mathbf{Z}}
\def\F{\mathbf{F}}
\DeclareMathOperator{\PSL}{PSL}
\DeclareMathOperator{\GL}{GL}
\DeclareMathOperator{\SL}{SL}
\DeclareMathOperator{\Aut}{Aut}
\newtheorem{Th}{Theorem}
\newtheorem{Co}[Th]{Corollary}
\newtheorem*{WGT}{Weierstrass Gap Theorem}
\theoremstyle{definition}
\newtheorem{DEF}{Definition}
\newtheorem*{notes}{Notes}
\title{Transitivity on Weierstrass points }
\author{Zoë Laing \and David Singerman}
\date{\today}
\begin{document}

\maketitle

\thispagestyle{empty}

\section{Introduction} 

An automorphism of a Riemann surface will preserve its set of
Weierstrass points.  In this paper, we search for Riemann surfaces
whose automorphism groups act transitively on the Weierstrass points.
One well-known example is Klein's quartic, which is known to have 24
Weierstrass points permuted transitively by it's automorphism group,
$\PSL(2,7)$ of order $168$.  An investigation of when Hurwitz groups act
transitively has been made by Magaard and Völklein
\cite{magaard-06}.  After a section on the preliminaries, we examine
the transitivity property on several classes of surfaces.  The easiest
case is when the surface is hyperelliptic, and we find all
hyperelliptic surfaces with the transitivity property (there are
infinitely many of them).  We then consider surfaces with automorphism
group $\PSL(2, q)$, Weierstrass points of weight 1, and other classes
of Riemann surfaces, ending with Fermat curves.

Basically, we find that the transitivity property property seems quite rare and that the surfaces we have found with this property are interesting for other reasons too.
\section{Preliminaries}
\label{sec:prelim}

\begin{WGT}[\cite{farkas-80}]
    Let $X$ be a compact Riemann surface of genus $g$.  Then for each
    point $p\in X$ there are precisely $g$ integers
    $1=\gamma_1<\gamma_2<\ldots<\gamma_g<2g$ such that there is no
    meromorphic function on $X$ whose only pole is one of order
    $\gamma_j$ at $p$ and which is analytic elsewhere.
\end{WGT}

The integers $\gamma_1,\ldots,\gamma_g$ are called the gaps at $p$.
The complement of the gaps at $p$ in the natural numbers are called
the non-gaps at $p$.  Thus $\alpha$ is a non-gap at $p$ if there is a
meromorphic function on $X$ that has a pole of order $\alpha$ at $p$
and is holomorphic on $X\setminus \{ p \}$.  If $f$ has a pole of order
$\alpha$ at $p$, and $g$ has a pole of order $\beta$ at $p$ then $fg$
has a pole of order $\alpha+\beta$ at $p$ so that the non-gaps at $p$
form a semi-group under addition.

\begin{DEF}
    The \emph{weight} of $p$, denoted by $w_p$, is given by  
    \begin{equation}
	\label{eq:2}
	w_p=\sum_ {i=1}^ g(\gamma_i-i).
    \end{equation}
\end{DEF}

\begin{DEF} 
    $p$ is called a Weierstrass point if its weight is non-zero.
    Alternatively, $p$ is a Weierstrass point if there is a
    meromorphic function on $X$ with a pole of order $\leq g$ at $p$
    and analytic elsewhere.  (Thus $\alpha_1\leq g$ or $\gamma_g > g$
    at $p$.)
\end{DEF}

\begin{Th} \cite{farkas-80}
    \label{thrm:farkas1}
    \begin{equation}
	\label{eq:farkas}
	\sum_{p\in X }{w_p}=g^3-g
    \end{equation}
\end{Th}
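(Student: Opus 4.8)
The plan is to realise the weight $w_p$ as the order of vanishing at $p$ of a single globally defined object — the Wronskian of a basis of holomorphic differentials — and then to compute its total number of zeros as the degree of an appropriate power of the canonical bundle. First I would fix a basis $\phi_1,\dots,\phi_g$ of the $g$-dimensional space of holomorphic differentials on $X$ (its dimension being $g$ by Riemann--Roch). In a local coordinate $z$ near a point $p$ write $\phi_j=f_j(z)\,dz$, and form the Wronskian determinant
\begin{equation*}
    W(z)=\det\bigl(f_j^{(i-1)}(z)\bigr)_{1\le i,j\le g}.
\end{equation*}

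The first task is to show that the local order of vanishing $\operatorname{ord}_p W$ equals $w_p$. The key input is the Riemann--Roch duality between gaps and differentials: $\gamma$ is a gap at $p$ precisely when there is a holomorphic differential vanishing to order exactly $\gamma-1$ at $p$, so the achievable vanishing orders of holomorphic differentials at $p$ are exactly $\gamma_1-1<\cdots<\gamma_g-1$. Choosing a basis adapted to $p$ (so that $\phi_j$ vanishes to order $\gamma_j-1$) and reading off the lowest-order term of the determinant, one finds
\begin{equation*}
    \operatorname{ord}_p W=\sum_{j=1}^g\bigl((\gamma_j-1)-(j-1)\bigr)=\sum_{j=1}^g(\gamma_j-j)=w_p,
\end{equation*}
and since passing between bases multiplies $W$ by a nonzero constant, this order is independent of the chosen basis.

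Next I would check that $W$ is not merely a local gadget but the coordinate expression of a global holomorphic section of $K^{\otimes g(g+1)/2}$, where $K$ is the canonical bundle. This is the step I expect to be the main obstacle: the individual derivatives $f_j^{(i-1)}$ do not transform tensorially, and one must verify by a chain-rule computation that the lower-order derivative terms introduced under a coordinate change are combinations of higher rows that cancel in the determinant, so that $W$ picks up exactly the factor $(dz/dw)^{1+2+\cdots+g}$ coming from the $g$ differentials together with the $\binom{g}{2}$ derivatives. Granting this, $W$ is a holomorphic section of $K^{\otimes N}$ with $N=g+\binom{g}{2}=g(g+1)/2$, and it is not identically zero because the $\phi_j$ are linearly independent.

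Finally, the number of zeros of a nonzero holomorphic section, counted with multiplicity, equals the degree of its line bundle, so summing the local orders computed above gives
\begin{equation*}
    \sum_{p\in X}w_p=\sum_{p\in X}\operatorname{ord}_p W=\deg K^{\otimes g(g+1)/2}=\frac{g(g+1)}{2}\,(2g-2)=g^3-g,
\end{equation*}
using $\deg K=2g-2$. This completes the argument, and as a by-product it shows that Weierstrass points (those with $w_p\neq0$) exist on every surface of genus $g\ge 2$.
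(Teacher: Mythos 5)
Your argument is correct and is essentially the standard Wronskian proof found in Farkas--Kra, which is precisely the source the paper cites for this theorem (the paper gives no independent proof). All the key steps check out: the gap--differential duality giving $\operatorname{ord}_p W=\sum_j(\gamma_j-j)=w_p$, the transformation of $W$ as a section of $K^{\otimes g(g+1)/2}$, and the degree count $\frac{g(g+1)}{2}(2g-2)=g^3-g$.
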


\begin{Th} \cite{farkas-80}
    \label{thrm:farkas2}
    The number $|W|$ of Weierstrass points on $X$(of genus $g\ge 2$)
    obeys 
    \begin{equation*}
	2g+2\leq |W|\leq g^3-g.
    \end{equation*}
    Furthermore, $X$ has $2g+2$
    Weirstrass points if and only if $X$ is hyperelliptic (a two
    sheeted covering of the sphere) and the branch points are the
    Weierstrass points, (this follows from
    section~\ref{sec:hyp_surfaces}).  Generically, a Riemann surface
    of genus $g$ will have $g^3-g$ Weierstrass points (see
    Rauch~\cite{rauch-59}).
\end{Th}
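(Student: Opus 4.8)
The plan is to derive both inequalities from the identity $\sum_{p\in X}w_p=g^3-g$ of Theorem~\ref{thrm:farkas1}, the real work being to control the weight of a single point. First, the upper bound is immediate: since the gaps $\gamma_1<\gamma_2<\cdots<\gamma_g$ are distinct positive integers we have $\gamma_i\ge i$, so each summand of $w_p=\sum_{i=1}^g(\gamma_i-i)$ is non-negative and $w_p$ is a non-negative integer that is $\ge 1$ exactly at the Weierstrass points. Non-Weierstrass points contribute $0$ to the total, so
\[
|W|=\sum_{p\in W}1\le\sum_{p\in W}w_p=\sum_{p\in X}w_p=g^3-g .
\]

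For the lower bound I would first establish the pointwise estimate $w_p\le\binom{g}{2}$, which is the heart of the argument. The key lemma is that $\gamma_i\le 2i-1$ for every $i$, and here the semigroup property of the non-gaps (noted in the preliminaries) is essential. Fix $i$ and look at the interval $[1,\gamma_i-1]$, which contains exactly the $i-1$ gaps $\gamma_1,\dots,\gamma_{i-1}$ together with some non-gaps. If $n$ is a non-gap with $1\le n<\gamma_i$, then $\gamma_i-n$ must be a gap: otherwise $\gamma_i=n+(\gamma_i-n)$ would be a sum of two non-gaps, hence a non-gap, contradicting that $\gamma_i$ is a gap. Thus $n\mapsto\gamma_i-n$ injects the non-gaps of $[1,\gamma_i-1]$ into its gaps, so there are at most $i-1$ of each; since $[1,\gamma_i-1]$ has $\gamma_i-1$ integers, $\gamma_i-1\le 2(i-1)$, i.e. $\gamma_i\le 2i-1$. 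Summing gives $w_p\le\sum_{i=1}^g(i-1)=\binom{g}{2}$.

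Combining this bound with Theorem~\ref{thrm:farkas1} yields
\[
g^3-g=\sum_{p\in W}w_p\le |W|\binom{g}{2},
\]
and since $(2g+2)\binom{g}{2}=(g-1)g(g+1)=g^3-g$ this rearranges to $|W|\ge 2g+2$. Equality $|W|=2g+2$ forces equality throughout, so every Weierstrass point must attain the maximal weight $\binom{g}{2}$; by the lemma this happens only when $\gamma_i=2i-1$ for all $i$, i.e. the gap sequence is $1,3,5,\dots,2g-1$. In particular $2$ is a non-gap, so there is a meromorphic function with a single double pole, exhibiting $X$ as a two-sheeted cover of the sphere with each such $p$ a branch point.

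Conversely, on a hyperelliptic surface the powers of the degree-two function force the non-gaps $2,4,\dots$ at each of the $2g+2$ branch points, pinning the gaps to the odd numbers $1,3,\dots,2g-1$ and the weight to $\binom{g}{2}$; as $(2g+2)\binom{g}{2}$ already exhausts $g^3-g$, these are all the Weierstrass points. I expect the pointwise bound $\gamma_i\le 2i-1$ to be the main obstacle, since it is the only step that genuinely uses the semigroup structure rather than bookkeeping with Theorem~\ref{thrm:farkas1}; the converse (hyperelliptic) direction of the equivalence can be deferred to Section~\ref{sec:hyp_surfaces}, as the authors indicate.
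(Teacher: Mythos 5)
Your proof is correct, but there is essentially nothing in the paper to compare it with: Theorem~\ref{thrm:farkas2} is quoted from \cite{farkas-80} without proof, the hyperelliptic characterization being deferred to Section~\ref{sec:hyp_surfaces} and the generic count to Rauch~\cite{rauch-59}. What you have written is the classical Farkas--Kra argument, and it is self-contained given Theorem~\ref{thrm:farkas1} and the semigroup property of non-gaps stated in the preliminaries: the involution $n\mapsto\gamma_i-n$ on $[1,\gamma_i-1]$ gives $\gamma_i\le 2i-1$, hence $w_p\le\binom{g}{2}$, and dividing this pointwise bound into the total weight $g^3-g$ gives $|W|\ge 2g+2$, with the equality analysis yielding the hyperelliptic characterization in both directions. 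It is worth noting that your route proves strictly more than the paper's pointer to Section~\ref{sec:hyp_surfaces} delivers: there, Schoeneberg's Theorem (the hyperelliptic involution fixes $2g+2\ge 6$ points) shows only that the branch points \emph{are} Weierstrass points, whereas the ``if and only if'' also requires that they are \emph{all} of the Weierstrass points, which is exactly what your count $(2g+2)\binom{g}{2}=g^3-g$ supplies. The one clause you leave to a citation --- that generically $|W|=g^3-g$ --- is likewise left to Rauch by the paper, so nothing is missing there.
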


A useful method of showing that $p$ is a Weierstrass points is
the following theorem whose proof can be found in
\cite{farkas-80} as Theorem~\textbf{V.1.7.} 

\begin{Th}
    \label{thrm:shoeneberg}
    (Schoeneberg)
    Let $h$ be an automorphism of a compact Riemann surface of genus
    $g\geq 2$.  If $h$ fixes more than $4$ points then these fixed
    points are Weierstrass points.
\end{Th}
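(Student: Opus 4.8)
The plan is to pass to the quotient by the cyclic group generated by $h$ and to build, at each fixed point, an explicit meromorphic function with a pole of order at most $g$ that is holomorphic elsewhere; by the second characterisation in the definition of a Weierstrass point, this is exactly what must be produced. Since $X$ has genus $g\geq 2$, any non-trivial automorphism has finite order, so I would write $n=\operatorname{ord}(h)\geq 2$ and take $\pi\colon X\to Y=X/\langle h\rangle$, the quotient map, of degree $n$, with $Y$ of genus $\gamma$. The key local observation is that if $h(p)=p$ then the whole cyclic group $\langle h\rangle$ stabilises $p$, so $p$ is \emph{totally} ramified: its ramification index equals $n$ and $\pi^{-1}(\pi(p))=\{p\}$. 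This total ramification is what forces the pullback below to have a pole concentrated at the single point $p$.

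Next, for a fixed point $p$ with image $q=\pi(p)$, I would invoke Riemann--Roch on $Y$: since $\deg\bigl((\gamma+1)q\bigr)=\gamma+1$, one gets $\ell\bigl((\gamma+1)q\bigr)\geq 2$, so there is a non-constant meromorphic function $f$ on $Y$ whose only pole is at $q$, of some order $m$ with $1\leq m\leq\gamma+1$. Pulling back, $f\circ\pi$ is a non-constant meromorphic function on $X$ whose only pole is at $p$ (because $\pi^{-1}(q)=\{p\}$), of order $mn\leq(\gamma+1)n$. Hence $mn$ is a non-gap at $p$, and it remains only to prove the inequality
\[
n(\gamma+1)\leq g,
\]
for then $mn\leq g$ exhibits a non-gap at $p$ that is at most $g$, so $p$ is a Weierstrass point.

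This inequality is where the hypothesis enters, and it is the \textbf{main point} of the argument. Writing $F\geq 5$ for the number of fixed points of $h$, Riemann--Hurwitz gives
\[
2g-2=n(2\gamma-2)+R,\qquad R\geq F(n-1),
\]
since each fixed point contributes $n-1$ to the total ramification $R$. Substituting and rearranging, $n(\gamma+1)\leq g$ follows once $F(n-1)\geq 4n-2$. For $n\geq 3$ this is immediate from $F\geq 5$, because $5(n-1)-(4n-2)=n-3\geq 0$. The \emph{delicate case} is the involution $n=2$, where $F\geq 5$ falls one short of the required $F\geq 6$. The resolution, which I expect to be the one step needing genuine care, is that for $n=2$ every ramification point is a fixed point, so $R=F$ and Riemann--Hurwitz reads $2g-2=2(2\gamma-2)+F$; this forces $F$ to be even, whence $F\geq 5$ improves to $F\geq 6$ and the estimate goes through.

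Assembling these pieces, every fixed point $p$ carries a meromorphic function with a single pole of order $\leq g$ that is analytic elsewhere, so each such point is a Weierstrass point. The borderline configuration $n=2$, $F=5$ is excluded not by the counting inequality itself but by the parity of the number of branch points of an involution, and I would isolate that observation explicitly rather than let it hide inside the general estimate.
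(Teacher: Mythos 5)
Your proof is correct, but note that the paper does not actually prove this theorem: it cites Farkas--Kra (Theorem V.1.7) and attributes the result to Schoeneberg and Lewittes, so the comparison here is with that cited proof rather than with anything written out in the paper. Your route --- pass to the quotient $Y=X/\langle h\rangle$ of genus $\gamma$, note that each fixed point $p$ is totally ramified so that the fibre over $q=\pi(p)$ is $\{p\}$, pull back a function given by Riemann--Roch on $Y$ with pole of order $m\le\gamma+1$ at $q$ to get a function on $X$ whose only pole is at $p$, of order $mn$, and then establish $n(\gamma+1)\le g$ from Riemann--Hurwitz --- is a complete and classical alternative argument. Every step checks: the reduction of the desired inequality to $R\ge 4n-2$, the case $n\ge 3$ where $F\ge 5$ suffices, and the delicate involution case $n=2$, where your parity observation (for an involution $R=F=2g+2-4\gamma$ is even, so $F\ge 5$ forces $F\ge 6$) is exactly what rescues the argument; a version that overlooked this would have a genuine gap. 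The proof in Farkas--Kra, due to Lewittes, runs in the opposite direction: assuming some fixed point is \emph{not} a Weierstrass point, it uses the gap sequence $1,2,\dots,g$ at that point to build an adapted basis of holomorphic differentials, and a trace computation for the induced linear action of $h$ on that space then shows $h$ can fix at most $4$ points. Your argument is more elementary (only Riemann--Roch and Riemann--Hurwitz) and yields an explicit non-gap $mn\le g$ at each fixed point; Lewittes' argument needs no case split on the order of $h$ and extracts finer information about the action on differentials. One presentational point: as literally stated the theorem is false for $h=\mathrm{id}$, which fixes every point; your proof implicitly (and correctly) assumes $h$ is non-trivial when you set $n=\operatorname{ord}(h)\ge 2$, and that hypothesis deserves to be made explicit.
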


This is equivalent to a theorem first stated by Schoeneberg in \cite{schoeneberg-51}.  In this form it is due to Lewittes \cite{lewittes-63}.

A compact Riemann surface $X$ of genus $g\geq 2$ can be represented as
$\mathcal U/K$ where $K$ is a torsion free Fuchsian group (a surface
group).  The full group of automorphisms of $X$ is then $N(K)/K$, and
it is well known that $N(K)$ is a Fuchsian group, and so a group $G$
of automorphisms of $X$ has the form $\Gamma/K$, where $\Gamma$ is a
cocompact Fuchsian group.  We call $\Gamma$ the lift of $G$.

Because of Schoeneberg's theorem we want to know the number of fixed
points of an automorphism.  For this we need a Theorem of
Macbeath~\cite{macbeath-73}.  Rather than stating this theorem in full
generality we will give a few consequences, also pointed out
in~\cite{macbeath-73}.  The following result is useful

\begin{Th}
    \label{thrm:4}
    Let $G$ be a cyclic group of automorphisms of $X$ of order $n$ and
    represent $G$ as $\Gamma/K$ as above. Suppose that $\Gamma$ has
    periods $m_1, \dots, m_r$.  If $t\in G$ has order $d$ then the
    number $F(t)$ of fixed points of $t$ is 
    \begin{equation*}
	F(t)= n\sum_{d|m_i} \frac{1}{m_i}.
    \end{equation*}
\end{Th}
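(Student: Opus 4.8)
The plan is to recast the number of fixed points of $t$ as the number of points of $X$ whose stabilizer in $G$ contains $t$, and then to evaluate this using the branching data of the quotient map $X\to X/G=\mathcal U/\Gamma$. Throughout I would assume $t\neq 1$ (so $d>1$); the identity of course fixes all of $X$.

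First I would analyse the stabilizers. Writing $X=\mathcal U/K$ and $G=\Gamma/K$, the automorphism $\gamma K$ sends $Kz$ to $K\gamma z$, so $Kz$ is fixed by $\gamma K$ exactly when some element of the coset $K\gamma$ fixes the point $z\in\mathcal U$. More invariantly, the stabilizer of $Kz$ in $G$ is the image $\Gamma_zK/K$ of the $\Gamma$-stabilizer $\Gamma_z$ of $z$. Since $K$ is torsion free we have $\Gamma_z\cap K=1$, so this stabilizer is isomorphic to $\Gamma_z$. Now $\Gamma_z$ is trivial unless $z$ is $\Gamma$-equivalent to one of the fixed points $P_1,\dots,P_r$ of the canonical elliptic generators $x_1,\dots,x_r$ of $\Gamma$, in which case $\Gamma_z$ is a conjugate of $\langle x_i\rangle$ and has order $m_i$. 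Hence every point of $X$ lying over the $i$-th cone point $Q_i=\Gamma P_i$ of $\mathcal U/\Gamma$ has stabilizer of order exactly $m_i$ in $G$, and every point of $X$ with nontrivial stabilizer arises this way.

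The second step is to turn ``$t$ fixes $Kz$'' into a divisibility condition. Since $t$ fixes $Kz$ iff $\langle t\rangle$ lies in the stabilizer of $Kz$, and since $G$ is cyclic, its unique subgroup of order $m_i$ contains the order-$d$ subgroup $\langle t\rangle$ precisely when $d\mid m_i$. Thus $t$ fixes some point over $Q_i$ if and only if $d\mid m_i$, and in that case it fixes \emph{every} point over $Q_i$. Finally I would count each fibre: as $X\to\mathcal U/\Gamma$ is the quotient by $G$, the group $G$ acts transitively on the fibre over $Q_i$, and by orbit--stabilizer this fibre has $n/m_i$ points. Summing over the relevant cone points gives
\begin{equation*}
F(t)=\sum_{i:\,d\mid m_i}\frac{n}{m_i}=n\sum_{d\mid m_i}\frac{1}{m_i}.
\end{equation*}

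The part needing most care is the first step: the clean identification of the $G$-stabilizer of $Kz$ with the cyclic group $\Gamma_z$ depends essentially on $K$ being torsion free, and one must remember that the periods are listed with multiplicity, so equal values $m_i=m_j$ still correspond to distinct branch points $Q_i\neq Q_j$ and contribute separately to the sum. This argument is exactly the cyclic specialisation of Macbeath's general fixed-point formula, in which the centraliser factor $|C_G(t)|$ collapses to $n$ because $G$ is abelian, and each elliptic generator contributes the single power of $\theta(x_i)$ equal to $t$, which occurs precisely when $d\mid m_i$.
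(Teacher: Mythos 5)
Your proof is correct, and it is worth noting that the paper itself offers no proof of this statement: Theorem~\ref{thrm:4} is simply quoted as a consequence of Macbeath's general fixed-point theorem in \cite{macbeath-73}, which for an arbitrary group $G=\Gamma/K$ expresses $F(t)$ as $|C_G(t)|\sum 1/m_i$, the sum running over those $i$ for which $t$ is conjugate in $G$ to a power of the image of the elliptic generator $x_i$. What you have done is reprove that result from scratch in the cyclic case, and your argument is sound: the identification of the stabilizer of $Kz$ with $\Gamma_z K/K\cong\Gamma_z$ (using torsion-freeness of $K$ and finiteness of $\Gamma_z$), the reduction of ``$\langle t\rangle$ lies in the unique subgroup of order $m_i$'' to the divisibility $d\mid m_i$ (using cyclicity of $G$), and the orbit--stabilizer count of $n/m_i$ points in the fibre over each cone point $Q_i$ are all correct, as is your care about periods being listed with multiplicity and about excluding $t=1$. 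The trade-off between the two routes: citing Macbeath buys generality --- the paper also needs the $\PSL(2,q)$ version (Theorem~\ref{thrm:5}), which genuinely requires the conjugacy/centralizer formulation --- while your direct argument is elementary and makes visible exactly where cyclicity is used, namely that the centralizer collapses to all of $G$ and conjugacy of subgroups collapses to equality, so that the answer depends only on the divisibilities $d\mid m_i$.
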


In section~\ref{sec:surfaces_with_Aut_PSL2q} we will use automorphism
groups of the form $\PSL(2,q)$, where $q=p^n$, $p$ a prime.

\begin{Th}
    \label{thrm:5}
    Let $\Aut X \cong \PSL(2, q)$, $q$ odd and again represent $\Aut
    X=\Gamma/K$ as above.  If $t\in \Aut X$ has order $d$ and $q$ is
    odd then
    \begin{equation*}
	F(t)=
	\begin{cases}
	    \displaystyle (q-1)\sum_{d \mid m_i} \frac{1}{m_i}
	    & 
	    \text{if $d\mid\frac{q-1}{2}$ }
	    \\
	    \displaystyle (q+1)\sum_{d \mid m_i} \frac{1}{m_i}
	    &
	    \text{if $d\mid\frac{q+1}{2}$.}
	    \\
	    \displaystyle\frac {(n, 2)}{2}p^{n-1}(p-1)\sum_{m_i=p}
	    1
	    &
	    \text {if $d=p$}.\end{cases}
	\end{equation*}

	If $q$ is even then
	\begin{equation*}
	    F(t)=
	    \begin{cases}
		\displaystyle 2(q-1)\sum_{d\mid m_i} \frac{1}{m_i}
		&
		\text{if $d\mid {q-1}$ }
		\\
		\displaystyle 2(q+1)\sum_{d\mid m_i} \frac{1}{m_i}
		&
		\text{if $d\mid{q+1}$}
		\\
		\displaystyle 2^{n-1}\sum_{m_i=2}1
		&
		\text {if $d=2$}.
	    \end{cases}
	\end{equation*}
\end{Th}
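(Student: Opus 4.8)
The plan is to invoke the full statement of Macbeath's theorem \cite{macbeath-73}, of which Theorem~\ref{thrm:4} is only the cyclic specialisation. In general, if $\Gamma$ has periods $m_1,\dots,m_r$ with elliptic generators $x_1,\dots,x_r$ and $\theta\colon\Gamma\to G$ is the covering epimorphism with $\ker\theta=K$, then since $K$ is torsion free each $\theta(x_i)$ has order exactly $m_i$, and for $1\neq t\in G$ of order $d$ the number of fixed points is $F(t)=|C_G(t)|\sum_{i=1}^r n_i/m_i$, where $n_i$ counts the integers $k$ with $1\le k\le m_i-1$ for which $\theta(x_i)^k$ is conjugate to $t$ in $G$. (When $G$ is cyclic, $|C_G(t)|=n$, conjugacy is equality, and $n_i$ is $1$ or $0$ according as $d\mid m_i$ or not, recovering Theorem~\ref{thrm:4}.) With $G=\PSL(2,q)$ the problem reduces to two inputs: the order of $C_G(t)$ and the count $n_i$, both read off from the subgroup structure of $\PSL(2,q)$.

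For semisimple $t$, i.e. of order $d>1$ prime to $p$, I would first recall the cyclic maximal tori: for $q$ odd these have orders $\frac{q-1}{2}$ (split) and $\frac{q+1}{2}$ (non-split), and for $q$ even orders $q-1$ and $q+1$. Since $\frac{q+1}{2}-\frac{q-1}{2}=1$ the two orders are coprime, so $t$ lies in a conjugate of exactly one torus, the split one when $d\mid\frac{q-1}{2}$ and the non-split one when $d\mid\frac{q+1}{2}$, and similarly for $q$ even. For such $t$ the centraliser is that torus, so $|C_G(t)|=\frac{q\mp1}{2}$ (resp. $q\mp1$). Conjugacy inside a torus is governed only by the Weyl inversion $t\mapsto t^{-1}$, so within each $\langle\theta(x_i)\rangle$ with $d\mid m_i$ exactly two powers are conjugate to $t$, giving $n_i=2$. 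Substituting yields $F(t)=(q\mp1)\sum_{d\mid m_i}1/m_i$ for $q$ odd and $2(q\mp1)\sum_{d\mid m_i}1/m_i$ for $q$ even, as claimed. The involution case $d=2$ needs separate attention because $t=t^{-1}$: the centraliser is then dihedral of order $q\mp1$ and $n_i=1$, but the two deviations cancel and the same formula results.

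The unipotent case $d=p$ is handled differently. Here $t$ is conjugate to $\begin{pmatrix}1&c\\0&1\end{pmatrix}$ and $|C_G(t)|=q$ (the Sylow $p$-subgroup, absorbing the central $\pm I$ when $q$ is odd). The relevant generators are those with $m_i=p$, for which $\langle\theta(x_i)\rangle$ is cyclic of order $p$; writing $\theta(x_i)^k$ as the unipotent with off-diagonal entry $k\in\F_p^\times$, conjugation by the diagonal torus scales that entry by a square, so $\theta(x_i)^k$ is conjugate to $t$ exactly when $k$ falls in the appropriate square class of $\F_q^\times$. For $q=2^n$ every element of $\F_q^\times$ is a square, there is a single class of involutions, $n_i=1$, and one obtains $2^{n-1}\sum_{m_i=2}1$. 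For $q$ odd one counts the $k\in\{1,\dots,p-1\}$ that are squares in $\F_q^\times$: such a $k$ is a square iff its order divides $\frac{p^n-1}{2}$, and since $1+p+\dots+p^{n-1}\equiv n\pmod2$ this holds for all of $\F_p^\times$ when $n$ is even and for exactly half of it when $n$ is odd, so the count is $\frac{(n,2)}{2}(p-1)$; multiplying by $|C_G(t)|/p=p^{n-1}$ gives $\frac{(n,2)}{2}p^{n-1}(p-1)\sum_{m_i=p}1$.

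I expect the unipotent case for $q$ odd to be the main obstacle, precisely because there are two conjugacy classes of order-$p$ elements. When $n$ is odd each class meets $\langle\theta(x_i)\rangle$ in $\frac{p-1}{2}$ powers, so every period-$p$ generator contributes equally and the formula is unconditional; but when $n$ is even the nonidentity powers of $\theta(x_i)$ all lie in one class, so a generator contributes $p-1$ to $n_i$ if $\theta(x_i)$ is conjugate to $t$ and $0$ otherwise. The clean value $\frac{(n,2)}{2}p^{n-1}(p-1)\sum_{m_i=p}1$ thus relies on the period-$p$ generators being conjugate to $t$, a point that must be verified in the situations where the theorem is applied; controlling this class distribution together with the quadratic-residue count is the delicate step, while the separate centraliser computation for involutions is a milder secondary issue.
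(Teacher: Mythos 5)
Your derivation is correct, but be aware that the paper contains no proof of this statement to compare against: Theorem~\ref{thrm:5} is presented, together with Theorem~\ref{thrm:4}, as a consequence of Macbeath's fixed-point theorem that is ``also pointed out in'' \cite{macbeath-73}. What you have written is therefore a reconstruction of the argument the paper delegates to that reference, and the reconstruction is sound. Your starting formula $F(t)=|C_G(t)|\sum_i n_i/m_i$ is an equivalent form of Macbeath's $F(t)=|N_G(\langle t\rangle)|\sum 1/m_i$ (summed over the $i$ with $t$ conjugate into $\langle\theta(x_i)\rangle$), since $n_i=|N_G(\langle t\rangle)|/|C_G(t)|$ whenever $n_i\neq 0$. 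The structural inputs you feed into it are the right ones: the two cyclic maximal tori of coprime orders $(q-1)/2$ and $(q+1)/2$ (or $q-1$, $q+1$ for $q$ even); the centraliser of a non-involutory semisimple element being its torus, with fusion in the torus controlled by inversion only, so $n_i=2$; the dihedral centraliser of order $q\mp1$ for involutions compensating $n_i=1$; and, for $d=p$, the centraliser of order $q$ together with the count of $k\in\F_p^{\times}$ that are squares in $\F_q^{\times}$ ($p-1$ of them if $n$ is even, $(p-1)/2$ if $n$ is odd), which is exactly the origin of the factor $(n,2)/2$.

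Your closing observation is a genuine refinement rather than a gap in your own argument: for $q$ odd and $d=p$ with $n$ even, $\PSL(2,q)$ has two conjugacy classes of elements of order $p$ and each cyclic subgroup of order $p$ meets only one of them, so the printed formula $\frac{(n,2)}{2}p^{n-1}(p-1)\sum_{m_i=p}1$ is valid only under the unstated hypothesis that every $\theta(x_i)$ with $m_i=p$ is conjugate to $t$; in general the sum must be restricted to those $i$ for which this holds (for $t$ in the other class the formula can even report fixed points where there are none). The theorem as stated glosses over this. The caveat never bites in this paper's applications --- in Theorems~\ref{thrm:9}, \ref{thrm:11} and~\ref{thrm:psl} either $q$ is even, $n$ is odd ($q=p$, $p^3$ or $8$), or the elements in question are themselves conjugates of powers of elliptic generator images, for which the hypothesis holds automatically --- but it would matter for a group such as $\PSL(2,9)\cong A_6$, where elliptic generators of period $3$ can map into either of the two classes of order-$3$ elements.
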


\section{Maps on surfaces} 

It is now useful to introduce maps on surfaces.  We think of the word
\emph{map} in its geographic sense.  Basically we just decompose the
surface into simply-connected regions.

A formal theory was given in~\cite{jones-78}, although the origins of
the theory go much further back.  A map on a surface $X$ is an
embedding of a graph $\mathcal G$ into $X$ such that the components of
the complement of $\mathcal M$, which are called the faces of
$\mathcal M$, are simply-connected.  The detailed theory is set out in
\cite{jones-78} but for briefer versions see \cite{singerman-97} or
\cite{jones-96}.  There it is shown that if a surface carries a map,
then it has the structure of a Riemann surface.  A \emph{dart} of a
map $\mathcal M$ is a directed edge and the automorphism group of
the map is also an automorphism group of the associated Riemann
surface.

\begin{DEF}
    A map is \emph{regular} if the automorphism group of the map
    acts transitively on the darts of the map.
\end{DEF}

The theory of regular maps goes back to the end of the 19th century;
for a survey of the classical work see \cite{coxeter-80}.

\begin{DEF}(\cite {singerman-97})
    A \emph{geometric point} of a map is a vertex, face, centre or
    edge-centre.
\end{DEF}

\begin{DEF}
    A Riemann surface that contains a regular map is called
    \emph{Platonic}.
\end{DEF}

In~\cite{singerman-97} we investigated the question of when the geometric
points are Weierstrass points.

Platonic surfaces are those that are of the form $\mathcal{U}/K$
where $K$ is a torsion-free normal subgroup of a Fuchsian triangle
group, with one period equal to $2$, so that the automorphism group of a Platonic surface is an image
of a $(2,m,n)$ triangle group. Two very famous Platonic surfaces
are
\begin{enumerate}  
    \item Klein's Riemann surface of genus $3$.  This surface has
    automorphism group $\PSL(2, 7)$ of order $168$.  It corresponds to
    a map of type $\{3, 7\}$.
    
    \item Bring's surface of genus $4$.  This surface has automorphism
    group $S_5$ of order $120$.  It corresponds to a map of type $\{5,
    4\}$ which gives the small stellated dodecahedron.  For Bring's
    surface, see~\cite{weber-05}.
\end{enumerate}
These surfaces will appear again in 
section~\ref{sec:necessary_condition}.

Since Grothendieck's esquisse d'un programme, maps are known as clean
dessins d'enfants.  Grothendieck pointed out that Belyi's Theorem
implies that the Riemann surface underlying a map corresponds to an algebraic curve defined over the field of algebraic
numbers.

\section{Hyperelliptic surfaces}
\label{sec:hyp_surfaces}

Hyperelliptic surfaces are Riemann surfaces that are two-sheeted
coverings of the Riemann sphere.  They admit an automorphism $J$,
called the hyperelliptic involution, that interchanges the two sheets
and which is central in the automorphism group of the surface.  By the
Riemann--Hurwitz formula there are $2g+2$ branch points on the surface
which are fixed by $J$.  If the genus of the surface is greater than
$1$ then these branch points are the Weierstrass points on the surface
(see~\cite{farkas-80}*{\textbf{111.7}}), or just use Schoeneberg's
Theorem.  We want to find the cases where the automorphism group of a
hyperelliptic surface acts transitively on the $2g+2$ Weierstrass
points.  The group then acts transitively on the projections of these
Weierstrass points on the sphere.  We study the cases where these
points are vertices, face centres or edge centres of regular maps on
the sphere.  These are the Platonic solids.  A study of branched
coverings of the Platonic solids has been made by Jones and
Surowski~\cite{jones-00}, and we use their results.  (Except they
considered branching over face-centres, whereas we consider branching
over vertices.)  First, some notation.  If $\mathcal M$ is a platonic
solid (or any map on a surface), we let $v$, $e$, and $f$ denote the
number of vertices, edges and faces of $\mathcal M$.  We let $m$ be
the valency of a vertex, and $n$ the valency of a face, that is the
number of edges of the face.  The type of the solid is then $\{n,
m\}.$

If we take a double cover, branched over the vertices we get a regular
map of type $\{n, 2m\},$ with $v$ vertices, $2f$ faces and 2$e$ edges.
The vertex valency is $2m$ and the face valency is $n$ and so the map has has type
$\{n,2m\}$.  As the $v$ vertices are the branch points we can compute
the genus $g$ from the formula $2g+2=v$, that is
\begin{equation*}
    g=(v-2)/2
\end{equation*}
or just by the Euler formula. 

We note the following result which follows from Proposition~4
of~\cite{jones-00}.  Consider the Riemann surface as a double covering of a
platonic solid with automorphism group $G$, which is branched over the
vertices.  Then the automorphism group of the surface (and of the
lifted map) is $C_2\times G$ if the vertex valency is odd.  We now
consider the Platonic solids in turn.

\begin{enumerate}
    \item \label{platonic1}
    The cube, type $\{4,3\}$, $v=8$, $f=6$, $e=12$, so the double
    cover branched over the vertices has type $\{4,6\}$.  It has $8$
    vertices, $12$ faces and $24$ edges.  The vertices are the
    Weierstrass points so that $2g+2=8$ giving $g=3$, which can also
    be found using the Euler characteristic.  The genus of the surface
    is $3$ and its automorphism group is $C_2\times S_4$.

    The results for other double covers, branched over the vertices is
    similar and we just give the results, for details
    see~\cite{jones-00}.

    \item \label{platonic2}
    The tetrahedron, type $\{3,3\}$ The double cover branched over the
    vertices has type $\{3,6\}$; it has $4$ vertices, $8$ faces and
    has $12$ edges and is of genus $1$.  Its automorphism group is
    $C_2\times A_4$ of order $24$.  It has genus $1$.

    \item \label{platonic3}
    The octahedron, type $\{3,4\}$.  The double cover branched over
    vertices has type $\{3,8\}$; it has 6 vertices, 16 faces and $24$
    edges and genus $2$.  It's automorphism group is $\GL(2,3)$ of
    order $48$ (for the only group of automorphisms of a surface of
    genus $2$ of order $48$ is $\GL(2, 3)$).

    \item \label{platonic4}
    The dodecahedron, type $\{5,3\}$ The double cover branched
    over the vertices has type $\{5,6\}$; it has $20$ vertices, $24$
    faces and $60$ edges It is of genus $9$.  Its automorphism group is
    $C_2\times A_5$.

    \item \label{platonic5}
    The icosahedron, type $\{3, 5\}$ The double-cover branched
    over the vertices has type $\{3,10\}$; It has $12$ vertices and
    $40$ faces and $60$ edges It is of genus $5$.  It's automorphism
    group is $C_2\times A_5$.

    \item \label{platonic6}
    The dihedron, type $\{n, 2\}$.  This solid has $n$ equally spaced
    vertices on the equator, It has $n$ edges and two faces, the upper
    and lower halves of the sphere.  The double cover branched over
    the vertices has type $\{n,4\}$.  It has $n$ vertices, $4$ faces
    and $2n$ edges.  Its genus is $(n-2)/2$.  The automorphism group
    of the Riemann surface that we obtain is twice as large as that of
    $D_n$ and so is equal to $4n=8(g+1)$.  These are the well known
    Accola--Maclachlan surfaces.  (The order of the largest
    automorphism group $M(g)$ of a Riemann surface of genus $g$ obeys
    $M(g)\geq 8(g+1)$; the lower bound is always attained by the
    Accola--Maclachlan surfaces,~\cite{accola-68, maclachlan-69}.)  The
    automorphism group has presentation $\langle r,s \mid
    r^4=s^n=(rs)^2=(r^{-1}s)^2=1\rangle$.

    \item \label{platonic7}
    The hosohedron, type $\{2, n\}$.  This is the dual of the
    dihedron.  It has $2$ vertices, $n$ edges and $n$ faces.  The
    double cover branched over the vertices has type $\{2,2n\}$ and
    has $2$~vertices, $2n$~edges and $2n$~faces and genus~$0$.  Its
    automorphism group is $D_{2n}$ if $n$ is even and $C_2\times D_n$
    if $n$ is odd.

    Branching over face-centres. By considering the dual we see that we
    get the same set of Riemann surfaces as we get by branching over
    faces.

    Branching over edge-centres.  We just find the genera of the
    double covers branched over the edge-centres.  If there are $e$
    edges in the Platonic solid then the double cover branched over
    these edge-centres also has $e$ edge-centres so that $e=2g+2$.  We
    thus find that the double covers branched over the edge-centres
    have genus $g=2$, for the tetrahedron, $g=5$ for the cube or
    octahedron $g=14$, for the icosahedron or dodecahedron.

    \item \label{platonic8}
    Star maps.  A free edge of a map is an edge, which is not a
    loop, that only has one vertex.  The star map $\mathcal {S}_e$ is
    a map on the sphere with one vertex $p$, $1$ face and $e$ free
    edges which all have $p$ as its only vertex.  If we draw the map
    so that the angles between its edges are all equal then Its
    automorphism group is $C_e$ which acts transitively by rotation on
    the edges.  If we consider the double cover branched over the $e$
    edge-centres then its genus is $g=(e-2)/2$ Its automorphism group
    is $C_2\times C_e$ if $e$ is odd and $D_e$ if $e$ is even.  In
    both cases its order is $2e=4g+4$.  Now this is half the order
    that we got when we considered branching over the vertices of of a
    dihedron.  As these edge-centres are the vertices of a regular
    $n$-gon , we get the same set of Riemann surfaces as in (6), the
    Accola--Maclachlan surfaces.
\end{enumerate}

A Fuchsian group interpretation. We briefly mention how this can be
interpreted using Fuchsian groups. First of all we state the
following result which is due to Maclachlan~\cite{maclachlan-69}.

\begin{Th}
    $X=\mathcal U/K, K$ a surface group is hyperelliptic if and only
    if $K$ is a subgroup of index $2$ in a Fuchsian group $\Gamma$ of
    signature $(0; 2^{2g+2})$, where the notation indicates $2g+2$
    periods equal to $2$.  The quotient $\Gamma/K$ acts as the
    hyperelliptic involution.
\end{Th}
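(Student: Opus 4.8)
The plan is to prove both implications using the correspondence between groups of automorphisms of $X$ and intermediate Fuchsian groups, together with the Riemann--Hurwitz formula and the fixed-point count of Theorem~\ref{thrm:4}.

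For the forward direction, suppose $X = \mathcal{U}/K$ is hyperelliptic with hyperelliptic involution $J$. Since $J$ is an automorphism of $X$ of order $2$, the group $\langle J\rangle \cong C_2$ has the form $\Gamma/K$ for a cocompact Fuchsian group $\Gamma$ containing $K$ with index $2$ (the lift of $\langle J\rangle$, as in the discussion preceding Theorem~\ref{thrm:4}). The quotient orbifold $\mathcal{U}/\Gamma = X/\langle J\rangle$ is the sphere, so $\Gamma$ has genus $0$. Every elliptic element of $\Gamma$ maps into $\Gamma/K \cong C_2$, so each period of $\Gamma$ divides $2$; since a period exceeds $1$, every period equals $2$. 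To count them, I would apply Riemann--Hurwitz to the degree-$2$ cover $X \to X/\langle J\rangle$: with genus $0$ below and each ramification point contributing $e_P - 1 = 1$, one obtains $2g - 2 = 2(0-2) + B$, so the number $B$ of branch points, which are exactly the fixed points of $J$, is $2g+2$. These fixed points are the cone points of $\mathcal{U}/\Gamma$, so $\Gamma$ has signature $(0; 2^{2g+2})$.

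For the converse, suppose $K$ has index $2$ in a Fuchsian group $\Gamma$ of signature $(0; 2^{2g+2})$. An index-$2$ subgroup is automatically normal, so $\Gamma/K \cong C_2$; let $J$ denote the nontrivial element, acting as an involution on $X$. Because $\Gamma$ has genus $0$, the quotient $X/\langle J\rangle = \mathcal{U}/\Gamma$ is the sphere, so $X$ is a two-sheeted branched cover of the sphere and is therefore hyperelliptic. Applying Theorem~\ref{thrm:4} with $n = 2$, $d = 2$, and all $2g+2$ periods equal to $2$ gives $F(J) = 2\sum 1/2 = 2g+2$ fixed points, which are the branch points and, consistently with Riemann--Hurwitz, confirm that $X$ has genus $g$. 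Hence $J$ is the hyperelliptic involution.

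The main obstacle, and the point requiring care rather than routine computation, is the converse's implicit demand that $K$ be torsion-free so that $X = \mathcal{U}/K$ is genuinely a surface. Writing $\Gamma = \langle x_1, \ldots, x_{2g+2} \mid x_i^2 = x_1\cdots x_{2g+2} = 1\rangle$, a surjection $\theta \colon \Gamma \to C_2$ is forced to send each elliptic generator $x_i$ to the nontrivial element; otherwise some conjugate of $x_i$ would lie in $K = \ker\theta$ and $K$ would contain an element of order $2$, contradicting that it is a surface group. Since $2g+2$ is even, the long relation is respected, so such a torsion-free $K$ exists. I would record this remark to confirm that the $K$ in the statement is actually realized, completing the equivalence.
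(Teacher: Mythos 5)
Your proof is correct, but there is nothing in the paper to compare it with: the paper does not prove this statement at all. It is quoted as a known result, attributed to Maclachlan \cite{maclachlan-69}, and is then only \emph{used} (e.g.\ to show that the genus-$2$ cover of the octahedron is hyperelliptic, via the subgroup $\theta^{-1}(\langle j\rangle)$ of signature $(0;2^6)$). So your argument is a self-contained reconstruction rather than an alternative to an internal proof, and it is the standard one: lift the involution to $\Gamma$ with $[\Gamma:K]=2$, identify $\mathcal U/\Gamma$ with the genus-$0$ quotient $X/\langle J\rangle$, get all periods equal to $2$ from torsion-freeness of $K$, and count the periods by Riemann--Hurwitz; conversely, normality of an index-$2$ subgroup gives the involution, and Riemann--Hurwitz (or Theorem~\ref{thrm:4}) gives genus $g$ and $2g+2$ fixed points. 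One small sharpening: in the forward direction, the assertion that each period divides $2$ is not merely because elliptic elements ``map into'' $C_2$ --- every element does that --- but because an elliptic element $x$ of order $m$ satisfies $\langle x\rangle\cap K=\{1\}$ (as $K$ is torsion-free), so $\langle x\rangle$ embeds in $\Gamma/K\cong C_2$ and $m=2$. You state exactly this torsion-freeness argument in your final paragraph, so the idea is present; it just belongs at the point where it is first used. The closing existence remark (that a torsion-free index-$2$ kernel exists because $2g+2$ is even) is not needed for the stated equivalence, which concerns a given $K$, but it is a harmless and correct observation.
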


We just consider one example, the octahedron with automorphism group
$\GL(2,3)$ as in~(\ref{platonic3}) above.  It lifted to a map of type
$\{3,8\}$  which lies on a Riemann surface $X=\mathcal{U}/K$, $K$ a
surface group.  Let $j$ denote the hyperelliptic involution. By the
results in \cite{jones-78} or \cite{coxeter-80} there is a
homomorphism $\theta:\Gamma\rightarrow \GL(2,3)$ where $\Gamma$ has
presentation $\langle x,y,z \mid x^3=y^8=(xy)^2=1\rangle.$ If
$j$ denotes the hyperelliptic involution (which is represented by
$-I\in \GL(2,3)$), then by standard Fuchsian group
results~\cite{singerman-70}, it
can be shown that $\theta^{-1}(\langle j\rangle)$ is a Fuchsian group
of signature $(0;2^6)$ which contains $K$ with index $2$ so that $X$
is hyperelliptic by Maclachlan's Theorem.

Having found these hyperelliptic surfaces whose automorphism group
acts transitively on the Weierstrass points we now show that these
are all the surfaces with this property.

So suppose that $G<\Aut X$ acts transitively on the Weierstrass points
of a hyperelliptic surface $X$ with hyperelliptic involution~$J$.
Then $X/\langle J\rangle =\Sigma$.  The Weierstrass points on $X$
lower to $2g+2$ points on $\Sigma$, which are permuted transitively by
a finite subgroup of Aut$\Sigma$ which is well-known to be $\PSL(2,
\mathbb{C}).$ The finite subgroups of $\PSL(2, \mathbb{C})$ are the
finite rotation groups of the sphere which are the ones that appear
above in (\ref{platonic1}), \ldots, (\ref{platonic8}).  Thus there are
no more hyperelliptic surfaces with transitive groups of
automorphisms.

We sum up in 

\begin{Th} 
    \label{thrm:7}
    For every integer $g\ge 1$ there is a hyperelliptic surface of
    genus $g$ whose automorphism group acts transitively on the
    Weierstrass points.  These surfaces are the Accola--Maclachlan
    surfaces described in~(\ref{platonic6}).  Otherwise, the only
    hyperelliptic surfaces with transitive automorphism groups have
    genera $1$, $2$, $3$, $5$, $9$ and $14$ which occur
    in the cases~(\ref{platonic1}) to~(\ref{platonic5}) above, and the last from the covering 
    of the icosahedron branched over the edge-centres.
   As all
    these surfaces are covers of regular solids branched over the
    vertices or edge-centres they are unique. (Note: we get two surfaces of genus 5.
    The first is the two-sheeted cover of the icosahedron branched over the 12 vertices, the second being the 
    two-sheeted cover of the cube branched over the edge-centres.)
\end{Th}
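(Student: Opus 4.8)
The plan is to prove the two halves separately: that each listed surface carries a transitive action on its Weierstrass points, and that these exhaust the hyperelliptic examples. For the first half I would argue uniformly across cases~(\ref{platonic1})--(\ref{platonic8}). Each such surface is a double cover $X\to\Sigma\cong\mathbb{P}^1$ branched over a set $B$ of geometric points — vertices or edge--centres — of a regular solid, so by Theorem~\ref{thrm:farkas2} the $2g+2$ branch points are exactly the Weierstrass points of $X$. The rotation group $\bar G$ of the solid permutes $B$ in a single orbit and lifts to $\Aut X$; since each Weierstrass point lies over a distinct point of $B$, the lifted action is already transitive on the Weierstrass points. Running this over the dihedra $\{n,2\}$ produces the Accola--Maclachlan surface of every genus $g=(n-2)/2\ge 1$, and over the five Platonic solids (over vertices, and over edge--centres for the sporadic genus~$14$ and the second genus~$5$) produces the remaining examples.

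For the converse I would use the reduction set up before the statement. Let $X$ be hyperelliptic of genus $g\ge 2$ with central hyperelliptic involution $J$, and suppose $G\le\Aut X$ acts transitively on the $2g+2$ Weierstrass points. Because $J$ is central and fixes every Weierstrass point, the action descends to $\Sigma=X/\langle J\rangle\cong\mathbb{P}^1$, yielding a finite group $\bar G\le\Aut\Sigma=\PSL(2,\mathbb{C})$ that acts transitively on the set $B\subset\Sigma$ of $2g+2$ images of the Weierstrass points; these images are precisely the branch points of $X\to\Sigma$. Thus $B$ is a single $\bar G$--orbit of even cardinality $2g+2$, and $X$ is recovered as $y^2=\prod_{b\in B}(x-b)$.

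It then remains to run through the finite subgroups of $\PSL(2,\mathbb{C})$ — cyclic, dihedral, and the polyhedral groups $A_4$, $S_4$, $A_5$ — and to determine, for each, which orbits can play the role of $B$. For a cyclic group the orbit is a set of equally spaced points on a circle of latitude, and here the branch divisor is rigid: a cyclic orbit of size $n$ gives $y^2=x^{n}-1$ irrespective of the latitude, which is exactly the Accola--Maclachlan surface of genus $(n-2)/2$, so the cyclic (and the equatorial dihedral) orbits contribute nothing beyond the infinite family. For the three polyhedral groups the orbits to match are the short orbits formed by the vertices, edge--midpoints and face--centres of the associated solid; computing $|B|$ — namely $4,6$ for $A_4$, $6,8,12$ for $S_4$, and $12,20,30$ for $A_5$ — produces exactly the genera $1,2,3,5,9,14$, with the cardinality $12$ arising both from the $S_4$ edge--orbit (cuboctahedron) and the $A_5$ vertex--orbit (icosahedron), which accounts for the two distinct surfaces of genus~$5$. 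Since each such orbit is rigid up to conjugacy in $\PSL(2,\mathbb{C})$, the divisor $B$, and hence $X$, is determined up to isomorphism, giving uniqueness.

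The step I expect to be the main obstacle is the completeness of this orbit analysis. Having reduced to a finite $\bar G\le\PSL(2,\mathbb{C})$ with $B$ a single orbit, one must pin down exactly which orbits occur and identify the cover each produces. The cyclic collapse to $y^2=x^{n}-1$ is the template for the rigidity one wants, but the delicate work lies with the polyhedral groups: matching the admissible even cardinalities against the orbit sizes, checking that each resulting double cover coincides with the branched cover of the corresponding regular solid in cases~(\ref{platonic1})--(\ref{platonic8}), and in particular distinguishing the two size-$12$ configurations responsible for the two genus-$5$ surfaces.
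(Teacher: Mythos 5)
Your proposal follows the paper's own route almost step for step: the same constructive half (the listed surfaces are double covers of the regular solids branched over vertices or edge--centres, and the rotation group lifts to give the transitive action), and the same converse reduction (quotient by the central hyperelliptic involution $J$, so that the $2g+2$ projected Weierstrass points form a single orbit of a finite subgroup $\bar G$ of $\PSL(2,\mathbb{C})$). The step you flag at the end as ``the main obstacle'' --- deciding \emph{which} orbits of $\bar G$ can occur as the branch set $B$ --- is precisely the step at which the paper is silent: it asserts that the finite rotation groups ``are the ones that appear above in (\ref{platonic1})--(\ref{platonic8})'' and stops, tacitly assuming that a transitive orbit must consist of geometric points (vertices, edge--centres, face--centres) of a regular map on the sphere.

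That obstacle is a genuine gap, and in fact it cannot be closed, because the restriction to short (special) orbits is not merely unjustified but false: $B$ may also be a \emph{free} orbit of $\bar G$, of size $|\bar G|$. Concretely, take $\bar G = D_n = \langle\, z\mapsto \zeta z,\ z\mapsto 1/z\,\rangle$ with $\zeta=e^{2\pi i/n}$, $n\ge 3$, and a generic point $a$; the free orbit is $B_a=\{\zeta^k a^{\pm 1}: 0\le k<n\}$, of size $2n$, and the double cover branched over $B_a$ is the genus-$(n-1)$ curve $y^2=(x^n-a^n)(x^n-a^{-n})$. The maps $(x,y)\mapsto(\zeta x,\,y)$ and $(x,y)\mapsto(1/x,\,y/x^n)$ are automorphisms of this curve projecting onto $D_n$, which permutes $B_a$ transitively; hence the automorphism group is transitive on all $2n$ Weierstrass points. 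These configurations are not M\"obius-rigid (the cross-ratio of $a,\zeta a, a^{-1},\zeta a^{-1}$ varies non-trivially with $a$), so for each $n\ge 3$ this is a non-isotrivial one-parameter family; for $n=3$ it is the classical genus-$2$ family $y^2=x^6+ax^3+1$, whose generic member has automorphism group of order $12$ with reduced group $S_3$ visibly transitive on the six roots $\{\zeta^k\alpha^{\pm1}\}$. Free orbits of $A_4$, $S_4$, $A_5$ (sizes $12$, $24$, $60$) behave the same way in genera $5$, $11$, $29$. So the rigidity you invoke for uniqueness holds only for the special orbits, your claim that the cyclic and equatorial dihedral orbits ``contribute nothing beyond the infinite family'' misses the non-equatorial dihedral orbits entirely, and the completeness assertion --- in your proposal and in the paper's proof alike --- fails: a correct statement must either add the hypothesis that the Weierstrass points have non-trivial stabilizer in the reduced group $\bar G$ (equivalently, that they are geometric points of a regular map on the sphere), or enlarge the classification to include these dihedral and polyhedral families.
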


\section{Surfaces with automorphism group $\PSL(2, q)$}
\label{sec:surfaces_with_Aut_PSL2q}

If $G$ is a group of automorphisms $X$ of a Riemann surface of genus
$g\ge 2$ then the size of its automorphism group $\Aut X$ is bounded
above by $84(g-1)$.  This is by a classic theorem of Hurwitz.  The
surfaces for which this bound is attained are called Hurwitz surfaces
and their automorphism groups are known as Hurwitz groups.  It is well
known that $G$ is a Hurwitz group if and only if there is a
homomorphism from the $(2,3,7)$ triangle group onto $G$.  The smallest
Hurwitz group is $\PSL(2,7)$ of order 168; the corresponding surfaces
is Klein's surface, which corresponds to the Klein quartic
\begin{equation}
    x^3y+y^3z+z^3x=0. 
\end{equation}

\pagebreak[3]

In \cite{macbeath-69} Macbeath proved 

\nopagebreak

\begin{Th}
    Let $q$ be a prime power.  The group $\PSL(2,q)$ is a Hurwitz
    group if and only if
    \begin{enumerate} 
	\item[(i)] $q=7$ or
	\item[(ii)] $q=p\equiv \pm 1  \pmod 7$ or
	\item[(iii)] $q=p^3,$ where $p\equiv  \pm 2$ or $\pm 3 \pmod 7$.   
    \end{enumerate}
\end{Th}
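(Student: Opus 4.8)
The plan is to use the standard reformulation of the Hurwitz condition in terms of generating triples. Since the $(2,3,7)$ triangle group has presentation $\langle x,y \mid x^2=y^3=(xy)^7=1\rangle$, the group $\PSL(2,q)$ is a Hurwitz group exactly when it possesses a generating pair $(x,y)$ with $x$ of order $2$, $y$ of order $3$ and $xy$ of order $7$. I would therefore work inside $\SL(2,q)$ and parametrise such pairs by their traces, exploiting the classical fact (Fricke--Vogt) that an irreducible pair in $\SL(2,q)$ is determined up to conjugacy by the triple $(a,b,c)=(\operatorname{tr}x,\operatorname{tr}y,\operatorname{tr}xy)$, and that every such triple is realised by some pair. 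Lifting from $\PSL$ to $\SL$, an involution has trace $a=0$, an element of order $3$ has trace $b=\pm1$, and an element of order $7$ has trace $c=\lambda$ with $\lambda=\zeta+\zeta^{-1}$ for $\zeta$ a primitive $7$th root of unity; these $\lambda$ are precisely the roots of the cubic $c(t)=t^3+t^2-2t-1$, the minimal polynomial of $2\cos(2\pi/7)$.

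The arithmetic heart of the argument is the factorisation of $c(t)$ modulo $p$. Its discriminant is $49$, so over $\mathbf{Q}$ its Galois group is cyclic of order $3$ and its splitting field is the real cubic subfield of $\mathbf{Q}(\zeta_7)$. By Frobenius, the factorisation type of $c$ over $\F_p$ is governed by the order of $p$ in $(\Z/7)^{\times}/\{\pm1\}\cong C_3$: when $p\equiv\pm1\pmod 7$ the cubic splits completely and $\lambda\in\F_p$; when $p\equiv\pm2$ or $\pm3\pmod 7$ it is irreducible and $\lambda$ generates $\F_{p^3}$; and $p=7$ is the degenerate case, where the order-$7$ element is unipotent of trace $2$. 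This trichotomy is exactly what produces cases (i), (ii) and (iii), and it is the step I would present most carefully.

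Next I would establish existence together with irreducibility. For the prescribed traces $(0,\pm1,\lambda)$ there is a pair $(x,y)$ in $\SL(2,q)$ realising them, and it generates an irreducible subgroup unless the commutator trace $\operatorname{tr}[x,y]=a^2+b^2+c^2-abc-2$ equals $2$; here that forces $\lambda^2=3$, which is incompatible with $c(\lambda)=0$ (combining the two relations yields $4=3$), so irreducibility always holds. The crux --- and the step I expect to be the main obstacle --- is to show that the irreducible subgroup $\langle x,y\rangle$ is the \emph{full} group $\PSL(2,q)$ and not a proper subgroup. For this I would invoke Dickson's classification of subgroups of $\PSL(2,q)$: the presence of an element of order $7$ eliminates $A_4$, $S_4$ and $A_5$, while an element of order $7$ realised as a product of an involution and an order-$3$ element cannot lie in a dihedral group, leaving only the subfield subgroups $\PSL(2,q')$ and $\mathrm{PGL}(2,q')$.

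Finally, these subfield subgroups are controlled by the trace field. Since the chosen traces generate $\F_p(\lambda)$, the group $\langle x,y\rangle$ is defined over $\F_p(\lambda)$ and over no smaller field, so it fills out $\PSL(2,q)$ precisely when $q=\F_p(\lambda)$ as computed above: $q=p$ when $p\equiv\pm1\pmod7$, and $q=p^3$ when $p\equiv\pm2,\pm3\pmod7$. The same bookkeeping rules out the higher prime powers $q=p^n$ in each case, since then all relevant traces lie in a proper subfield and $\langle x,y\rangle$ is a proper subfield subgroup; it likewise settles $p=7$, where for $q=7^n$ with $n>1$ every relevant trace lies in $\F_7$, so only $q=7$ survives. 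Assembling the three surviving possibilities yields the stated characterisation.
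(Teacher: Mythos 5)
The paper itself contains no proof of this statement: it is Macbeath's theorem, quoted with a citation to \cite{macbeath-69}, so there is no internal argument to compare yours against. What you have written is, in substance, a reconstruction of Macbeath's own proof from that reference, and its skeleton is correct: reduction to $(2,3,7)$-generating pairs, lifting to $\SL(2,q)$ with trace triple $(0,\pm 1,\lambda)$ where $\lambda$ is a root of $t^3+t^2-2t-1$, the Frobenius trichotomy for that cubic modulo $p$ (split when $p\equiv\pm1\pmod 7$, irreducible when $p\equiv\pm2,\pm3\pmod 7$, triple root $2$ when $p=7$), irreducibility from $\operatorname{tr}[x,y]=\lambda^2-1\neq 2$, Dickson's classification of subgroups, and the trace-field computation pinning $q$ to $\F_p(\lambda)$. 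The small-characteristic cases cause no trouble: in characteristic $3$ an element of order $3$ is $\pm$(unipotent) with trace $\pm 2=\mp 1$, and in characteristic $2$ involutions have trace $0$, so your trace prescriptions remain valid.

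Two steps deserve flagging, since as written they carry the real weight. First, the realisability of an arbitrary trace triple by a pair in $\SL(2,q)$ is not the classical Fricke--Vogt statement (which concerns $\mathbf{C}$); over finite fields it is precisely Theorem 1 of \cite{macbeath-69}, so you are quoting the main technical lemma of the source rather than proving it --- acceptable, provided it is cited as such. Second, in eliminating subfield subgroups you must treat $\mathrm{PGL}(2,q')$ as well as $\PSL(2,q')$: elements of $\mathrm{PGL}(2,q')\setminus\PSL(2,q')$ lift to matrices whose traces need only have their \emph{squares} in $\F_{q'}$, so from $\operatorname{tr}(xy)=\lambda$ you may a priori conclude only $\lambda^2\in\F_{q'}$. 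The gap closes because the cubic relation gives $\lambda=(1-\lambda^2)/(\lambda^2-2)\in\F_p(\lambda^2)$ (note $\lambda^2\neq 2$, else the cubic would give $1=0$), whence $\F_p(\lambda)\subseteq\F_{q'}$ in all cases; some such computation is needed to make your ``defined over no smaller field'' assertion correct. With those two points made explicit, your outline closes up into Macbeath's proof.
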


Thus there are infinitely many surfaces that admit Hurwitz groups as
their automorphism groups, and for this reason it is worth us studying
the transitivity problem for Riemann surfaces whose automorphism group
is $\PSL(2, q)$.  However, there are many other Hurwitz groups.  For
example, Conder \cite{conder-85} showed that the alternating group
$A_n$, is Hurwitz for all $n\ge 168$.  The first $3$ Hurwitz groups are
of the form $\PSL(2, q)$, namely for $q=7, 8, 13$, corresponding to
genera $3$, $7$, $14$.  The next largest is of order $1344$ which acts
on a surface of genus $17$.

%\subsection{A simple formula for transitivity}  

%(Write more later.)

\begin{Th}
    \label{thrm:9}
    If $q>15$ then the Hurwitz group $\PSL(2, q)$ does not give a
    transitive action on the Weierstrass points of a Riemann surface
    that it defines.
\end{Th}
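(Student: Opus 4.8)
The plan is to show that for a Hurwitz group $\PSL(2,q)$ with $q > 15$, the action on Weierstrass points fails to be transitive. The key strategic observation is that transitivity on Weierstrass points forces all Weierstrass points to have equal weight. If there are $|W|$ Weierstrass points and $G \cong \PSL(2,q)$ acts transitively on them, then by Theorem~\ref{thrm:farkas1} each point has weight $w = (g^3-g)/|W|$, and moreover $|W|$ must be divisible by the size of a single $G$-orbit structure; in particular the stabilizer of a Weierstrass point has index $|W|$ in $G$. So the first step is to use Theorem~\ref{thrm:shoeneberg} together with the fixed-point count in Theorem~\ref{thrm:5} to \emph{locate} the Weierstrass points. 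Since $G$ is a Hurwitz group, its lift $\Gamma$ is the $(2,3,7)$ triangle group, so the periods are $m_1=2$, $m_2=3$, $m_3=7$, and the genus is $g = 1 + |G|/168$ where $|G| = q(q^2-1)/2$.

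First I would enumerate, for each nontrivial $t \in \PSL(2,q)$ of order $d$, the number $F(t)$ of fixed points using Theorem~\ref{thrm:5} with periods $(2,3,7)$. An element contributes fixed points only when its order $d$ divides one of $2,3,7$, i.e. $d \in \{2,3,7\}$; elements of these orders are exactly the images of the elliptic generators. For each such $d$ I would compute $F(t)$ explicitly: the elements of order $7$ are the ones most likely to fix more than $4$ points for large $q$, and by Schoeneberg's theorem (Theorem~\ref{thrm:shoeneberg}) each such element with $F(t) > 4$ contributes its fixed points to the Weierstrass set. The second step is to identify the \emph{full} Weierstrass set: I would argue that for $q > 15$ the Weierstrass points are precisely the fixed points of the order-$7$ elements (and possibly order-$3$ elements), because these are forced to be Weierstrass by Schoeneberg while the generic points carry no special weight. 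This partitions $W$ according to which conjugacy class of cyclic subgroups fixes a given point.

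The heart of the argument is then a \textbf{counting obstruction}. The fixed points of order-$7$ elements and the fixed points of order-$3$ elements form $G$-invariant subsets of $W$, and if both are nonempty and have different sizes, then $G$ cannot act transitively on $W$ (an orbit cannot mix points with different point-stabilizer orders). Concretely, a point fixed by an order-$7$ element has a stabilizer containing $C_7$, while a point fixed only by an order-$3$ element has a stabilizer containing $C_3$; these give orbits of sizes $|G|/7 \cdot(\text{something})$ versus $|G|/3\cdot(\text{something})$, which differ. So I would compute the total count $\sum_p w_p = g^3 - g$ two ways and show that the weights attached to the two types of Weierstrass points cannot coincide once $q > 15$, contradicting transitivity. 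The case analysis handles small $q$ separately: for $q = 7, 8, 13$ (genera $3, 7, 14$) the bound $q \le 15$ excludes them, and indeed Klein's quartic ($q=7$) \emph{does} admit a transitive action, which is exactly why the threshold sits at $15$.

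The main obstacle I expect is the precise identification of the full Weierstrass set for large $q$: Schoeneberg's theorem guarantees that high-multiplicity fixed points \emph{are} Weierstrass, but it does not immediately guarantee there are no \emph{other} Weierstrass points of generic type, nor that the fixed points exhaust $W$. To close this gap I would use the global weight identity $\sum_{p} w_p = g^3 - g$ as a rigidity constraint: if the orbit of fixed points already accounts for the correct total weight (computed via the per-point weight coming from the explicit gap sequence at a point with a cyclic stabilizer of order $7$ or $3$), then there can be no further Weierstrass points, pinning down $W$ exactly. Balancing this total-weight bookkeeping against the orbit-size arithmetic from Theorem~\ref{thrm:4} and Theorem~\ref{thrm:5} is where the real work lies, and it is precisely the incompatibility of these two constraints that should force non-transitivity for every $q > 15$.
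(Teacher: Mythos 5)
Your core mechanism is the right one, and it is essentially the paper's: use Macbeath's fixed-point counts (Theorem~\ref{thrm:5}) together with Schoeneberg's Theorem to produce two nonempty $G$-invariant families of Weierstrass points whose members have point-stabilizers of different orders, which is incompatible with $W$ being a single orbit. But your execution hinges on the wrong conjugacy classes, and there it genuinely fails. By Theorem~\ref{thrm:5}, the number of fixed points of an element of order $d\in\{2,3,7\}$ is (up to a factor $2$ when $q$ is even) $(q\pm1)/d$, so order-$7$ elements fix the \emph{fewest} points, not the most --- your heuristic is backwards. Concretely, $\PSL(2,27)$ and $\PSL(2,29)$ are Hurwitz groups with $q>15$, and in both cases an element of order $7$ fixes exactly $(27+1)/7=4$, respectively $(29-1)/7=4$, points; Schoeneberg's Theorem requires \emph{more than} $4$ fixed points, so it says nothing about those fixed points (the face-centres of the associated regular map), and your proposed identification of $W$ with ``the fixed points of the order-$7$ elements (and possibly order-$3$ elements)'' collapses for exactly the surfaces the theorem is about. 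The paper instead uses the elements of order $2$ and order $3$: for every Hurwitz $q>15$ these fix more than $7$ and more than $5$ points respectively, so the edge-centres and the vertices of the regular map all lie in $W$; and since a vertex has stabilizer of order $3$ while an edge-centre has stabilizer of order $2$ (equivalently, no automorphism of the map sends a vertex to an edge-centre, and by maximality of the $(2,3,7)$ group the automorphisms of the surface are automorphisms of the map), these two subsets of $W$ cannot lie in a single orbit.

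A second, structural point: the difficulty you flag at the end --- the ``precise identification of the full Weierstrass set'' --- is a red herring, and the total-weight bookkeeping via $\sum_p w_p=g^3-g$ is not needed at all. Non-transitivity does not require knowing $W$; it only requires exhibiting two Weierstrass points that no automorphism can interchange. Once Schoeneberg hands you one vertex and one edge-centre inside $W$, the proof is finished, regardless of what other Weierstrass points the surface may have and regardless of their weights. Replacing your order-$7$ class by the order-$2$ class and deleting the attempted determination of $W$ turns your outline into the paper's (much shorter) proof.
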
 

By Theorem~\ref{thrm:5}, the number of fixed
points of an element of order $3$ is greater than $5$, whereas the
number of fixed points of an element of order $2$ is greater than $7$
and hence by Schoeneberg's Theorem these fixed points are Weierstrass
points.  Hurwitz groups are associated to a regular map on a surface,
so the fixed points of elements of order $3$ are vertices of the
associated map, whereas the elements of order 2 are edge-centres.  As
$\Gamma(2,3,7)$ is maximal, the automorphism group of the surface and
the automorphism group of the map coincide.  As no automorphism of the
map can take vertices to edge centres, the automorphism group of the
surface does not act transitively on the Weierstrass points.

The cases $q= 7, 8$  and $13$ are still to be dealt with.   

Firstly, if $q=7$ then the corresponding Riemann surface of genus $3$
is the well-known Klein surface.  Transitivity on the Weierstrass
points was proved in \cite{singerman-97} as the $24$ Weirstrass
points are shown to be the face-centres of the regular map of type
$\{7, 3\}$ on the surface.  It is also proved in \cite{magaard-06} by
more group-theoretic methods.  We just use a counting argument
similar to \cite{magaard-06}. This will also work for $q=8$ and is
applicable to $q=13$ as also noted in \cite{magaard-06}.

\begin{Th}
    \label{thrm:Wpts_trans}
    Let $W$ denote the set of Weierstrass points on the Klein surface
    $X$ of genus $3$.  The automorphism group $\Aut(X)$ acts
    transitively on $W$.
\end{Th}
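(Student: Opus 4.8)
The plan is to run a counting argument (in the spirit of \cite{magaard-06}) that pins down $W$ purely from its cardinality and the orbit structure of $\Aut(X)\cong\PSL(2,7)$, a group of order $168$. The two inputs are Farkas' identity (Theorem~\ref{thrm:farkas1}) and the fact that, for a Hurwitz surface, the point stabilisers are very restricted.

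First I would record the weight constraint. By Theorem~\ref{thrm:farkas1},
\[
\sum_{p\in X} w_p = g^3-g = 24,
\]
and since every $p\in W$ has $w_p\ge 1$ (its weight is a non-negative integer which is non-zero by definition of a Weierstrass point), this forces $|W|\le 24$. On the other hand $|W|\ge 2g+2=8$ by Theorem~\ref{thrm:farkas2}, so in particular $W\neq\emptyset$. Because any automorphism carries the gap sequence at $p$ to the gap sequence at its image, the weight function is constant on $\Aut(X)$-orbits; hence $W$ is a union of $\Aut(X)$-orbits.

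Next I would determine the possible orbit sizes. Since $X$ is the Hurwitz surface $\mathcal U/K$ with $K\triangleleft\Gamma$, $\Gamma$ a $(2,3,7)$ triangle group, the only points of $X$ with non-trivial stabiliser are the images of the elliptic fixed points of $\Gamma$, whose stabilisers are cyclic of order equal to one of the periods $2,3,7$. Thus every point stabiliser is one of $C_1,C_2,C_3,C_7$, and by orbit--stabiliser every $\Aut(X)$-orbit has size $168,\,84,\,56$ or $24$. In particular the smallest possible orbit has size $24$ (stabiliser $C_7$), realised by the fixed points of the order-$7$ elements --- the $24$ face-centres of the associated $\{7,3\}$ map, for which Theorem~\ref{thrm:5} gives exactly $3$ fixed points per element of order $7$.

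Finally I would combine the two facts. $W$ is a non-empty union of orbits each of size at least $24$, yet $|W|\le 24$; hence $W$ is a single orbit, necessarily the one of size $24$ with stabiliser $C_7$. Therefore $\Aut(X)$ acts transitively on $W$ (and, as a by-product, every Weierstrass point has weight exactly $1$). The one step that needs care --- and which I regard as the crux --- is the claim that no orbit is smaller than $24$: this rests entirely on the triangle-group description forcing all stabilisers to be cyclic of order $2$, $3$ or $7$, so I would make sure to justify that an element of $\Aut(X)$ fixes a point of $X$ only when it is the image of an elliptic element of $\Gamma$, whose order divides one of the periods.
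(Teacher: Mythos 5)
Your proposal is correct and is essentially the paper's own argument: both use Farkas' identity $\sum w_p = 24$ together with the fact that the $\PSL(2,7)$-orbits on $X$ have sizes $24$, $56$, $84$ or $168$ (stabilisers cyclic of order $7$, $3$, $2$ or $1$, coming from the periods of the $(2,3,7)$ group, equivalently from the geometric points of the $\{3,7\}$ map), so that the only way to fit non-trivial weights into a total of $24$ is a single orbit of size $24$ with every point of weight $1$. The paper writes this as the Diophantine equation $24w_1+56w_2+84w_3+168w_4=24$ whose only non-negative solution is $w_1=1$, $w_2=w_3=w_4=0$, which is exactly your "no orbit is smaller than $24$" step in different clothing.
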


\begin{proof} 
    As $g=3$, the total weight of the Weierstrass points on $X$ is
    $3^3-3=24$ by Theorem~\ref{thrm:farkas1}.  Now $\PSL(2, 7)$
    besides acting as a group of automorphisms $X$ also acts as a
    group of automorphisms of the regular map of type $\{3, 7\}$
    underlying $X$.  The only points with non-trivial stabilizers are
    the vertices, face-centres and edge-centres, where the stabilizers
    have orders $3$, $7$ and $2$ respectively.  Thus the size of the
    possible orbits are $\sigma_1=168/7=24$, $\sigma_2=168/3=56$,
    $\sigma_3=168/2=84$ and $\sigma_4=168/1=168$, the latter case
    occurring for interior points of the faces of the map, i.e non-geometric points.  If we let
    $w_1$, $w_2$, $w_3$, $w_4$ denote the weights of the Weierstrass
    points in these $4$ orbits we get

    \begin{equation}
	\sum_{i=1}^4w_j\sigma_j=24.
    \end{equation}
    or 
    \begin{equation} 
	24\sigma_1+56\sigma_2+84\sigma_3+168\sigma_4=24.
    \end{equation}
    The only solution is $\sigma_1=1$, $\sigma_2=\sigma_3=\sigma_4=0$.
    Thus only the face-centres are Weierstrass points, and they all
    have weight $1$, and so $\Aut X$ is transitive on the $24$
    Weierstrass points.
\end{proof}

The next largest Hurwitz group is $\PSL(2, 8)$ of order $504$, and
this acts on a surface of genus $7$, often called Macbeath's surface.
(Macbeath wrote an important paper on this Riemann surface,
\cite{macbeath-65}.)

\begin{Th}
    \label{thrm:11}
    Let $W$ denote the set of Weierstrass points of Macbeath's surface
    $X$ of genus $7$.  The automorphism group of $X$ acts transitively
    on $W$.
\end{Th}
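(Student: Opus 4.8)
The plan is to mimic the counting argument used for the Klein surface in Theorem~\ref{thrm:Wpts_trans}, since Macbeath's surface is again a Hurwitz surface arising from the $(2,3,7)$ triangle group. First I would record the numerical data: here $g=7$ and $|\Aut X|=|\PSL(2,8)|=504=84(g-1)$, so by Theorem~\ref{thrm:farkas1} the total weight of the Weierstrass points is $g^3-g=336$. As before, $\PSL(2,8)$ acts on the underlying regular map of type $\{3,7\}$, and because the $(2,3,7)$ triangle group is maximal the map automorphism group coincides with $\Aut X$. The only points with non-trivial stabiliser are the vertices, face-centres and edge-centres, with stabilisers of orders $3$, $7$ and $2$; hence the possible orbit sizes are $504/3=168$, $504/7=72$, $504/2=252$ and $504/1=504$, the last occurring for non-geometric interior points.

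Next, since any automorphism preserves the gap sequence, all points in a single orbit share the same weight. Writing $w_1,w_2,w_3,w_4$ for the common weight of a point in the four orbit types, the total-weight identity becomes
\[
168w_1+72w_2+252w_3+504w_4=336 .
\]
I would then solve this in non-negative integers. Since $504>336$ forces $w_4=0$, and $2\cdot 252>336$ forces $w_3\in\{0,1\}$, only a short check remains: if $w_3=1$ then $168w_1+72w_2=84$ forces $w_1=0$ and then $w_2=84/72$ is not an integer, while if $w_3=0$ then $168w_1+72w_2=336$ forces $w_1=2$ and $w_2=0$. Thus the unique solution is $w_1=2$, $w_2=w_3=w_4=0$, so the Weierstrass points are exactly the $168$ vertices, each of weight $2$, and $\Aut X$ permutes them transitively.

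Finally, as a consistency check I would confirm directly that the vertices are Weierstrass points, using Theorem~\ref{thrm:5} with $q=8$: an element of order $3$ divides $q+1=9$, so it fixes $2(q+1)\cdot\frac13=6$ points, and since $6>4$, Schoeneberg's Theorem~\ref{thrm:shoeneberg} guarantees these vertices are Weierstrass points (consistent with $w_1\ge 1$). I do not expect a genuine obstacle, as the argument is purely a counting and Diophantine one; the pleasant feature is that integrality alone pins down the solution uniquely. The only points needing care are extracting the correct orbit sizes from the map type and verifying that the weight equation has no other non-negative integer solution. It is worth noting that, unlike the Klein case where the single orbit of Weierstrass points had weight $1$, here the unique orbit carries weight $2$.
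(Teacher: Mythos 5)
Your proposal is correct and follows essentially the same route as the paper: its proof of Theorem~\ref{thrm:11} likewise takes the total weight $7^3-7=336$, sets up the orbit-weight equation $72w_1+168w_2+252w_3=336$ coming from the face-centres, vertices and edge-centres of the regular map of type $\{3,7\}$, and finds the unique non-negative integer solution placing weight $2$ on each of the $168$ vertices and zero elsewhere. Your closing consistency check via Theorem~\ref{thrm:5} and Schoeneberg's Theorem is an addition the paper does not make, but it does not change the argument.
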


\begin{proof}
    The total weight of the Weierstrass points is $7^3-7=336$.  We
    follow the proof of Theorem~\ref{thrm:Wpts_trans}, we now get the
    equation
    \begin{equation}   
	\sum_{i=1}^4w_j\sigma_j=336,
    \end{equation}
    or 
    \begin{equation}
	72w_1+168w_2+252w_3=336. 
    \end{equation}  
    The only solution is $w_2=2$, $w_1=w_3=0$, and so only vertices
    are Weierstrass points of weight $2$ and $\Aut X$ acts
    transitively on the Weierstrass points.
\end{proof}

The next largest Hurwitz group is $\PSL(2, 13)$ of order $1092$.  This
acts as a group of automorphisms of a surface of genus $13$.  The total
weight of the Weierstrass points is $14^3-14=2730$.  Again, we follow
the proof of Theorem~\ref{thrm:Wpts_trans} and we get
\begin{equation*}
    156w_1+364w_2+546w_3+1092w_4=2730
    \end{equation*}

This has  10 solutions in positive integers;  These are 
$(w_1,w_2,w_3,w_4)=$
\begin{enumerate}
\item $(0,0,5,0)$
\item $(0,0,1,2)$
\item $(0,0,3,1)$
\item $(0,3,3,0)$
\item $(0,6,1,0)$
\item $(7,0,3,0)$
\item $(14,0,1,0)$
\item $(0,3,1,1)$
\item $(7,0,1,1)$
\item $(7,3,1,0)$

\end{enumerate}

 If the first case occurs
then $\PSL(2,13)$ acts transitively on the Weierstrass points which
are all edge-centres of weight $5$, but if cases 2--7 occur there are two orbits of Weierstrass points, while if cases 8--10
occur then there are 3 orbits. We cannot decide which of these occur so  
as in~\cite{magaard-06}, we cannot decide transitivity. In fact it is possible that there are one, two, or three orbits 
of Weierstrass points.
(However, because of the large number of solutions of the above equation we might guess
that transitivity does not occur.)
We can now easily deduce

\begin{Th}
    \label{thrm:edge}
 If $X$ is a Hurwitz surface of genus 14, on which $\PSL(2,13)$ acts as a group of automorphisms (there are 3 of them \cite{macbeath-69}) then the edge-centres of the corresponding map of type $\{7,3\}$ are Weierstrass points.
\end{Th}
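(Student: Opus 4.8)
The plan is to read the conclusion straight off the weight equation
\begin{equation*}
    156w_1+364w_2+546w_3+1092w_4=2730
\end{equation*}
derived just above, in which $w_3$ is the common weight of the points in the edge-centre orbit (of size $\sigma_3=1092/2=546$). Since an automorphism of $X$ is a biholomorphism and hence preserves the gap sequence, and therefore the weight, at each point, all points in a single $\Aut X$-orbit carry the same weight; the map is regular so the edge-centres form one such orbit. Thus showing that the edge-centres are Weierstrass points is exactly showing $w_3>0$, and I would avoid the tedium of inspecting all ten listed solutions by arguing this directly.

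First I would reduce the equation modulo $4$. The key observation is that, among the four orbit sizes $156$, $364$, $546$, $1092$, exactly one fails to be divisible by $4$, namely the edge-centre orbit. This is structural rather than accidental: $1092=2^2\cdot3\cdot7\cdot13$, and the edge-centre stabiliser is the unique one of even order (order $2$), so dividing $1092$ by it strips off one factor of $2$ and leaves $546\equiv2\pmod 4$, whereas dividing by the odd stabilisers $7$, $3$, $1$ retains the full power $2^2$ and yields multiples of $4$. On the right-hand side, $2730=g^3-g=14\cdot13\cdot15$ contains exactly one factor of $2$, so $2730\equiv2\pmod 4$ as well.

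Reducing the equation modulo $4$ then collapses it to $2w_3\equiv2\pmod 4$, that is, $w_3$ is odd. In particular $w_3\geq1$, so the edge-centres have positive (indeed odd) weight and are therefore Weierstrass points. Because the argument uses only $g=14$ together with the orbit structure forced by the $(2,3,7)$ presentation of the map of type $\{7,3\}$, it applies uniformly to each of the three Hurwitz surfaces of genus $14$ that admit $\PSL(2,13)$, irrespective of which of the ten admissible solutions $(w_1,w_2,w_3,w_4)$ is actually realised on a given surface.

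I do not anticipate a genuine obstacle here: once the weight equation is in hand the statement reduces to a single congruence. The only thing worth resisting is the temptation to verify $w_3\neq0$ by running through the ten solutions case by case; the mod-$4$ reduction makes that enumeration unnecessary and, as a bonus, explains conceptually why $w_3$ comes out odd in every solution on the list.
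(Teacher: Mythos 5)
Your proof is correct, but it is not the route the paper takes. The paper's primary proof bypasses the weight equation entirely: it applies Macbeath's fixed-point count (Theorem~\ref{thrm:5}) to an involution $t\in\PSL(2,13)$ --- since $2\mid\frac{13-1}{2}$ and the only period of $\Gamma(2,3,7)$ divisible by $2$ is $2$ itself, $F(t)=(13-1)\cdot\frac{1}{2}=6$ --- and then invokes Schoeneberg's Theorem (Theorem~\ref{thrm:shoeneberg}): an automorphism with more than $4$ fixed points fixes only Weierstrass points, and the fixed points of the involutions are precisely the edge-centres of the map. The paper mentions only as an afterthought (``alternatively'') that $w_3\neq 0$ in all ten listed solutions, which is exactly the enumeration you set out to avoid. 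Your mod-$4$ reduction is a genuine refinement of that second argument: the coefficients $156$, $364$, $1092$ are divisible by $4$ while $546\equiv 2\pmod 4$ and $2730\equiv 2\pmod 4$, so $2w_3\equiv 2\pmod 4$, forcing $w_3$ odd and hence positive; as a bonus this explains the otherwise unremarked fact that $w_3$ is odd in every one of the ten solutions. What each approach buys: yours is self-contained arithmetic once the weight equation (and the four-orbit bookkeeping behind it) is granted, and it yields the extra parity information; the paper's Schoeneberg route needs no weight accounting at all, requires only the single fixed-point computation, and is the method that scales to the other cases --- it is precisely how Theorem~\ref{thrm:9} and Theorem~\ref{thrm:psl} handle $q>15$, $q=11$ and $q=t=13$, where the analogous coefficient-divisibility accidents are not available.
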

\begin{proof} By Theorem 5, the number of fixed points of an element of order 2 is 6, so by Schoeneberg's Theorem these are Weierstrass points.  Alternatively, in all the above solutions, $w_3\not= 0.$  
\end{proof}

So even though we cannot decide transitivity we know where 546 of the Weierstrass points are.

{\it Added note. Very recently, Manfred Streit in a detailed investigation of the genus 14 Hurwitz surfaces has shown that the vertices and face-centres of he corresponding map are not Weierstrass points so that $w_1=w_2=0$.  Thus only the first 3 of the above possibilities can occur, so the chances of transitivity have increased from one in ten to one in three! Also, there are at most two orbits of Weierstrass points.}

By Theorems~\ref{thrm:9}, \ref{thrm:Wpts_trans}
and~\ref{thrm:11} we see that  only for the Macbeath-Hurwitz surfaces of genus 14 are we, as yet, unable to decide transitivity.

\medskip

Macbeath \cite{macbeath-69} has shown that for  all $q\not= 9$, that $\PSL(2,q)$ is an image of $\PSL(2,\bf{Z})$, the modular group which is a free product  of $C_2$ and $C_3$.  Thus $\PSL(2,q)$ is an image of the triangle group $(2,3,t)$ for some $t\ge 7$. Let $K_{t,q}$ be the kernel of the homomorphism $\theta:(2,3,t)\rightarrow PSL(2,q)$ and let $X_{t,q}=\mathcal{U}/K_{t,q}$. 
\begin{Th}
\label{thrm:psl}

If $q>15$ or $q=11$ or $q=t=13$ then $\Aut X_{t,q}\cong \PSL(2,q)$ does not act transitively on $X_{t,q}$.
\end{Th}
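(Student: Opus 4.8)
The plan is to follow the argument used for Theorem~\ref{thrm:9}, replacing the Hurwitz triangle group $(2,3,7)$ by the general $(2,3,t)$. Write $G=\PSL(2,q)$. The surface $X_{t,q}$ carries a regular map with automorphism group $G$, and the only points with non-trivial stabiliser are the vertices, edge-centres and face-centres, whose stabilisers are cyclic of orders $3$, $2$ and $t$ respectively. These three sets are therefore single $G$-orbits of sizes $|G|/3$, $|G|/2$ and $|G|/t$, which are pairwise distinct because $t\ge 7$. To prove that $G$ is not transitive on the set $W$ of Weierstrass points of $X_{t,q}$ it is enough to show that \emph{two} of these orbits lie entirely inside $W$, since then $W$ splits into at least two orbits. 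By Schoeneberg's Theorem (Theorem~\ref{thrm:shoeneberg}) such an orbit consists of Weierstrass points as soon as the element fixing it has more than $4$ fixed points, so the whole proof reduces to the fixed-point count supplied by Theorem~\ref{thrm:5}.

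First I would dispose of the edge-centres, the fixed points of an element of order $2$. For every $q$ in the statement (all of which satisfy $q\ge 11$), Theorem~\ref{thrm:5} gives at least $(q-1)/2\ge 5$ fixed points when $q$ is odd, and $2^{\,n-1}\ge 8$ when $q=2^n$ with $q>15$; in every case this exceeds $4$, so the edge-centres always lie in $W$. It then remains to produce a second orbit inside $W$, and here the cases separate. For $q>15$ the element of order $3$ fixing the vertices has at least $(q-1)/3>4$ fixed points (the subcases $p=3$ and $q$ even only increase the count), so the vertices join the edge-centres. For the two remaining values $q=11$ and $q=t=13$ the order-$3$ count is exactly $4$, so Schoeneberg does not apply to the vertices; instead I would use the face-centres. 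In both of these cases $t=p=q$, so the face-centres are fixed by an element of order $p$, and the third formula of Theorem~\ref{thrm:5} gives $(p-1)/2$ fixed points, namely $5$ when $q=11$ and $6$ when $q=13$, again more than $4$. Thus the edge-centres and the face-centres both lie in $W$.

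In every case this exhibits two distinct orbits of geometric points inside $W$, so $G\cong\PSL(2,q)$ cannot permute $W$ transitively. The hard part is precisely the borderline behaviour of the fixed-point count: Schoeneberg's criterion is strict, and for $q=11$ and $q=13$ the vertices sit exactly on the threshold of $4$ fixed points, which is why the argument must abandon the vertices in favour of the face-centres in those two cases. A secondary point to verify is that $\Aut X_{t,q}$ really is $\PSL(2,q)$ and coincides with the automorphism group of the map, so that the orbit structure is exactly these three geometric types and no extra automorphism can identify orbits of different sizes; this follows from the maximality of the triangle group $(2,3,t)$ for the relevant $t$ (Singerman's classification of non-maximal Fuchsian signatures), exactly as the maximality of $(2,3,7)$ was used in Theorem~\ref{thrm:9}.
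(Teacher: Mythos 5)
Your proposal is correct and follows essentially the same route as the paper: for $q>15$ it repeats the Theorem~\ref{thrm:9} argument (vertices and edge-centres are Weierstrass points by Theorem~\ref{thrm:5} plus Schoeneberg), while for $q=11$ and $q=t=13$ it switches to edge-centres and face-centres because the order-$3$ element has exactly $4$ fixed points, invoking maximality of $(2,3,t)$ to identify $\Aut X_{t,q}$ with the map automorphism group. In fact your fixed-point counts are slightly more careful than the paper's own proof, which (with apparent typos) refers to an element of ``order $5$'' for $q=11$ and reports $5$ fixed points for the involution where the correct values are the order-$11$ element and $6$ fixed points, respectively.
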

\begin{proof}
If $q>15$ then the proof follows as in   theorem \ref{thrm:9}. If $q=11$, then by Theorem 5 we see that the number of fixed points of both an element of order $2$ and of order $5$ is equal to $5$, and so by Theorem 3, the edge-centres and face-centres of the corresponding map are Weierstrass points and  so $\Aut X$ does not act transitively. If $q=t=13$, then by Theorem 5 the number of fixed points of the element of order 13 is 6 and so again edge-centres and face-centres are Weierstrass points.
\end{proof}

For $p$ prime let $ \Gamma(p)$ be the principal congruence subgroup of the modular group of level $p$ and let the modular surface $X(p)$be the compactification of $ \mathcal{U}/\Gamma(p)$. Then $X(p)=X_{p,p}$, so by Theorems 10 and 13 we have 
\begin{Co}
 $\Aut X(p)$ acts transitvely on the Weierstrass points of $X(p)$ if and only if $p=7$.
\end{Co}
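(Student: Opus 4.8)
The plan is to read the corollary off Theorems~\ref{thrm:Wpts_trans} and~\ref{thrm:psl} by a short case analysis over the primes, after recording that the identification $X(p)=X_{p,p}$ places us in the situation $t=q=p$: the surface carries the map of the triangle group $(2,3,p)$, and $\Aut X(p)\cong\PSL(2,p)$.

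First I would settle the degenerate range by a genus count. A standard Riemann--Hurwitz computation for the modular curve gives $g(X(p))=1+(p^2-1)(p-6)/24$ for $p\ge 5$, so $X(p)$ has genus $0$ when $p\in\{2,3,5\}$ and genus $\ge 2$ for every prime $p\ge 7$, with no prime producing genus $1$. Thus for $p\le 5$ the surface has genus $0$, lies outside the scope of the Weierstrass theory (which requires $g\ge 2$), and carries no Weierstrass points; the substantive content therefore lies in the range $p\ge 7$. The ``if'' direction is then immediate: for $p=7$ the surface $X(7)$ is the Klein surface of genus $3$, and Theorem~\ref{thrm:Wpts_trans} states precisely that its automorphism group $\PSL(2,7)$ acts transitively on the Weierstrass points.

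For the ``only if'' direction I would check that every prime $p\ge 11$ satisfies a hypothesis of Theorem~\ref{thrm:psl} under the forced balanced pair $t=q=p$. The prime $11$ falls under the clause $q=11$; the prime $13$ falls under the clause $q=t=13$, which applies exactly because $X(13)=X_{13,13}$ forces $t=q=13$; and every prime $p\ge 17$ satisfies $q=p>15$. Since no prime lies strictly between $13$ and $17$, these clauses exhaust all primes $p\ge 11$, and in each case Theorem~\ref{thrm:psl} rules out transitivity.

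The only step requiring genuine care is this exhaustion: one must notice that $p=13$ is captured by the special clause ``$q=t=13$'' rather than slipping past the bound $q>15$, and this is legitimate only because $X(p)=X_{p,p}$ pins both the triangle-group and the group parameters to the same prime. Once that observation is in place, the corollary follows at once from the two cited theorems.
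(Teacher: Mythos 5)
Your proposal is correct and follows essentially the same route as the paper: identify $X(p)$ with $X_{p,p}$ and then combine Theorem~\ref{thrm:Wpts_trans} (transitivity for $p=7$) with Theorem~\ref{thrm:psl} (failure of transitivity for $p=11$, $p=13$ via the $q=t=13$ clause, and all $p>15$). The paper's own proof is exactly this one-line citation of the two theorems; your genus computation disposing of $p\le 5$ and your explicit exhaustion of the clauses of Theorem~\ref{thrm:psl} merely fill in details the paper leaves implicit.
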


\section{Platonic surfaces of low genus}
\label{sec:platonic_with_low_genus}

In \cite{singerman-97} the following problem was investigated: When is
a Weierstrass point on a Platonic surface a geometric point?  We
solved this for $g=2,3,4$ and, except for one possible exception, for
$g=5$.  This one exception corresponded to a map of type $\{15,6\}$
where we found that the vertices and face-centres were Weierstrass
points, but we could not decide about the edge centres.  (In
\cite{singerman-97}, $\{m,n\}$, meant vertex valencies $m$ and face
valencies $n$, unlike this paper.)  We found the following results
which we present as in the following two examples.

\begin{center}
    \begin{tabular}{llllllcc}
        \toprule
        & &g& $V$ & $F$ & $E$ & $G$ & order of $G$ 
	\\ 
	\midrule
	(i) & $\{12,3\}$ & 3 & $4^{2}$ & $16^{1}$ & $24$ &
	 $\langle\langle 2,3 \mid 3\rangle\rangle$& $48$
	\\
	(ii) & $\{8,3\}$ & 3&$12^{2}$ & $32$ & $48$ &
	$(2,3,8;3)$ & $96$
	\\
        \bottomrule
    \end{tabular}
\end{center}

In (i) this means that the map type is $\{12,3\}$, the genus is $3$,
there are $4$ faces which are Weierstrass points of weight $2$, there
are $16$ vertices of weight $1$, and the edges are not Weierstrass
points.  As there are Weierstrass points at both vertices and
face-centres (or, alternatively as they have different weights), we do
not get transitivity on the Weierstrass points.

In (ii), we find that all of the Weierstrass points are at the
face-centres, so that , as the automorphism group permutes the faces
transitively, we get a transitive action on the Weierstrass points,
The automorphism group $G$ is usually described in the Coxeter--Moser
notation \cite{coxeter-80}, but if the surface is an Accola--Maclachlan
surface (see section~\ref{sec:hyp_surfaces} (\ref{platonic6}) where we
give the presentation) we denote the group by $AM$; it has order
$8(g+1)$.

We now examine the other cases in \cite{singerman-97}, where all the
Weierstrass points are of the same type.  We have to be careful of the
fact that it is possible for a Riemann surface to carry more than one
map (see~\cite{singerman-03}).  This is when there is an inclusion
relation between triangle groups, for example $(2,n,n)<(2,4,n)$, so
that when a surface carries a regular map of type $(n,n)$ it may also
carries one of type $(4,n)$.  Also $(2,n,2n)<(2,3,2n)$, (see
\cite{singerman-72}).  We then choose the larger group.

We find the following cases:
\nopagebreak
\begin{center}
    \begin{tabular}{llllllcc}
        \toprule
        & &g& $V$ & $F$ & $E$ & $G$ & order of $G$ 
	\\ 
	\midrule
	(1)&$\{6,4\}$&2&$6^1$&$4$&$12$&$AM$&$24$
	\\
	(2)&$\{8,3\}$&2&$16$&$6^1$&$24$&$\GL(2,3)$&$48$
	\\
	(3)&$\{8,4\}$&3&$8^3$&$4$&$16$&$AM$&$32$
	\\
	(4)&$\{6,4\}$&$3$&$12$&$8^3$&$24$&$S_4\times C_2$&$48$
	\\
	(5)&$\{8,3\}$&$3$&$32$&$12^2$&$48$&$(2,3,8;3)$&$96$
	\\
	(6) & $\{7,3\}$ & $3$&$56$ & $24^1$ & $84$ &
	$\PSL(2,7)$ & $168$
	\\
	(7)&$\{10,4\}$&$4$&$10^6$&$4$&$20$&$AM$&$40$
	\\
	(8)&$\{5,4\}$&$4$&$30$&$24$&$60^1$&$S_5$&$120$
	\\
	(9)&$\{12,4\}$&$5$&$12^{10}$&$4$&$24$&$AM$&$48$
	\\
	(10)&$\{3,10\}$&$5$&$40$&$12^{10}$&$60$&$C_2\times A_5$&$120$
	\\
	(11)&\{8,3\}&$5$&$64$&$24^5$&$96$&$\SL(2, {\bf Z/}8)$&$192$
	\\
	(12)&$\{5,4\}$&$5$&$40^3$&$32$&$80$&***&$160$
	\\
	% Write a little about the group *** in (11)?
	%(10)&$\{5,5\}$&$5$&$16$&$16$&$40^3$&$\langle{4,5|2,4}\rangle$&$80$
        \bottomrule
\end{tabular}
\end{center}

Now (1), (2), (3), (4), (7), (9) and (10) correspond to hyperelliptic
surfaces and so have already appeared in
section~\ref{sec:hyp_surfaces}.  Also (6) is Klein's surface which appeared in
section~\ref{sec:surfaces_with_Aut_PSL2q}.  The cases (5), (8),
(11) and (12) give new examples where transitivity occurs. ((5) is the Fermat curve $F_4$, (see section 10),  (8) is Bring's surface, (11) is a double cover of the Fermat curve, sometimes known as Wiman's curve  and (12) is known as Humbert's curve.  
For these curves see \cite{keem-10}. In (12) the group *** has
presentation $\langle{r,s \mid
r^5=s^4=(rs)^2=(rs^{-1})^4=1}\rangle$.  For more details on the
regular map and the group see~\cite{coxeter-39}*{p.~135}.
	
%In (iii) the genus is 4, the map type is $\{5, 4\}$  and the
%automorphism group $S_5$  of order 120 acts transitively on the 60
%Weierstrass points of weight 1 which lie at the edge-centres.  This
%is on Bring's surface.

%In (iv) the genus is 5,  the map is of type $\{5, 4\}$  and the
%automorphism group of order 160 acts transitively on the 40
%Weierstrass points of weight 3 which lie at the vertices.

%In (v), the map is of type $\{8, 3\}$ and the automorphism group of
%order 192 acts transitively on the 24 Weierstrass points of weight 5
%which lie at the face-centres.

%There was one case  in genus 5 in \cite{singerman-97} we could not
%decide whether the geometric points were Weirstrass points. This was
%for a regular map of type $\{15, 6\}$.  Here we showed that there
%were Weierstrass points of weight 5 at the 2 face centres and
%Weierstrass points of weight 4 at the 5 vertices and so the
%automorphism group is not transitive on the Weierstrass points. 

\section{Simple Weierstrass points}
 
A Weierstrass point of weight $1$ is called a simple Weierstrass
point.  By equation~\ref{eq:2}, if all Weierstrass points of a surface of genus
$g$ have weight $1$ then the number of Weierstrass points is equal to
$g^3-g$.  As shown in \cite{rauch-59} in some sense this is the
general case.  However the following result shows that this is certainly not
the case for Riemann surfaces whose automorphism group is transitive
on the Weierstrass points.   
\begin{Th}
    If there is a transitive action on the Weierstrass points on a
    Riemann surface $X$ of genus $g>2$ and the Weierstrass points are
    simple then either $g=4$ and $X$ is Bring's surface, or $g=3$ and $X$ is Klein's surface or $g=3$ and $\Aut X\cong S_4$.  If the latter case occurs then  $\Aut X$ acts freely on the Weierstrass points.  \end{Th}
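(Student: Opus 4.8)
The plan is to convert simplicity and transitivity into a single sharp numerical constraint and then to dispose of the finitely many surviving configurations with the fixed-point results of Section~\ref{sec:prelim}. First I would note that if every Weierstrass point is simple then $\sum_p w_p = |W|$, so Theorem~\ref{thrm:farkas1} gives $|W| = g^3-g = (g-1)g(g+1)$. The stabiliser in $\Aut X$ of a point acts faithfully on the tangent line and is therefore cyclic; writing $s$ for the order of the stabiliser of a Weierstrass point, transitivity gives $|\Aut X| = s|W| = s(g-1)g(g+1)$ while Hurwitz' bound gives $|\Aut X| \le 84(g-1)$. Dividing by $g-1$ leaves $sg(g+1) \le 84$, whence $g \le 8$ and, for each genus, only finitely many admissible $s$: namely $s \le 7$ for $g=3$, $s \le 4$ for $g=4$, $s \le 2$ for $g=5,6$, and $s=1$ (a free action) for $g=7,8$.

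Next I would discard the genera in which no group of the forced order exists. As $|\Aut X| = s(g^3-g)$ is a multiple of $g^3-g$ bounded by $84(g-1)$, I would compare it with the known maximal order of an automorphism group in each genus: $g=6$ would require order at least $210$ and $g=8$ order $504$, each exceeding the maximum attainable there, so both genera are vacuous. This leaves $g \in \{3,4,5,7\}$ together with a short explicit list of orders.

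For these pairs the decisive observation is that, $W$ being a single orbit, it contains every Weierstrass point of $X$; hence by Schoeneberg's Theorem~\ref{thrm:shoeneberg} every nontrivial automorphism either fixes at most four points or has all of its fixed points in $W$. Since $|\Aut X| > 4(g-1)$ the quotient $X/\Aut X$ is a sphere by Riemann--Hurwitz, and the resulting signature $(0;m_1,\dots,m_r)$ is pinned down to a short list by Riemann--Hurwitz together with the fixed-point counts of Theorems~\ref{thrm:4} and~\ref{thrm:5}. Running through it, the configurations with $s \ge 2$ collapse to $(g,|\Aut X|) = (3,168)$, where $W$ is the orbit of $24$ face-centres of Klein's surface as in Theorem~\ref{thrm:Wpts_trans}, and $(4,120)$, where $W$ is the orbit of $60$ edge-centres of weight $1$ on Bring's surface (case~(8) of the first table of Section~\ref{sec:platonic_with_low_genus}). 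The candidates in genus $5$ and $7$ are excluded because the groups of order $120$ and $336$ acting there---such as $C_2\times A_5$ in genus $5$, whose Weierstrass points are the $12$ face-centres of weight $10$---either fail to produce a single free orbit of simple points or carry Weierstrass points of higher weight, contradicting simplicity.

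The only surviving possibility is then $g=3$ with $s=1$, so $|\Aut X| = 24$ and the action on the $24$ Weierstrass points is free. Here Riemann--Hurwitz forces $\sum_i(1-1/m_i) = 13/6$, so the signature is $(0;3,4,4)$ or $(0;2,2,2,3)$, and among the groups of order $24$ admitting such a generating triple and acting on a genus-$3$ surface with no nontrivial element fixing more than four points I expect only $S_4$ to survive; its short orbits over the cone points then carry no weight, so the $24$ simple Weierstrass points form one regular orbit on which $\Aut X$ acts freely, exactly as asserted. The main obstacle is precisely this last identification together with the genus-$5$ and genus-$7$ eliminations: each rests on the classification of group actions of the prescribed order in the given genus, and it is there, rather than in the arithmetic reduction, that the substantive work lies.
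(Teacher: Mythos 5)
Your arithmetic reduction is exactly the paper's: simplicity plus Theorem~\ref{thrm:farkas1} gives $|W|=g^3-g$, transitivity gives $|\Aut X|=s|W|\le 84(g-1)$, hence $g\le 8$, and genera $6$ and $8$ die against the maximal orders $150$ and $336$; the trichotomy you reach is also the paper's. The problem is that the two places where you defer to ``running through'' the classification contain genuine gaps, and one of them is a step that fails as written. For genus $7$ the forced value is $|\Aut X|=336$ exactly (since $s=1$), and you propose to inspect ``the groups of order $336$ acting there'' and find that they carry non-simple Weierstrass points or no free orbit. But no group of order $336$ acts on any genus-$7$ surface at all --- Riemann--Hurwitz admits no genus-zero signature with $\sum(1-1/m_i)=2+\tfrac{12}{336}$, and the realized large orders in genus $7$ are $504$ and then $288$ --- so the inspection you describe is vacuous and establishes nothing. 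The argument that works (the paper's) is: order $504$ forces Macbeath's surface, whose Weierstrass points have weight $2$ by the proof of Theorem~\ref{thrm:11}, and every other automorphism group in genus $7$ has order at most $288<336=|W|\le|\Aut X|$.

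Second, the claim that the $s\ge 2$ configurations ``collapse'' to $(3,168)$ and $(4,120)$ is asserted, not derived, and the tools you cite cannot derive it: a sphere quotient together with the fixed-point formulas of Theorems~\ref{thrm:4} and~\ref{thrm:5} (the latter concerns only $\PSL(2,q)$ anyway) yields a list of signatures, not an identification of groups or surfaces. The lemma that makes list-consultation legitimate, and which your write-up never states, is the one in the paper's proof: if $|\Aut X|>24(g-1)$ then the lift of $\Aut X$ has fundamental region of measure less than $1/12$ and so is a $(2,m,n)$ triangle group, making $X$ Platonic; only then may one quote Conder's census and the Singerman--Watson tables to identify Klein's surface in genus $3$ and Bring's surface in genus $4$ (the $60$ simple edge-centres), to invoke the weight-$10$ obstruction in genus $5$, and to exclude $|G|=240$ in genus $5$ via $M(5)=192$ (a case your proposal never addresses). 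Your genus-$3$, $s=1$ analysis also lists too few signatures: $(0;2,4,12)$, $(0;2,6,6)$ and $(0;3,3,6)$ likewise satisfy $\sum(1-1/m_i)=13/6$. On this last point, though, you are in the same position as the authors: the paper does not prove that order $24$ forces $S_4$ with signature $(0;2,2,2,3)$ either, but merely exhibits it as the one possibility it cannot exclude, which is precisely why the $S_4$ alternative appears in the statement of the theorem.
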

 
\begin{proof}  
    Let $|W|$ denote the number of Weierstrass points on $X$. As these
    Weierstrass points form an orbit under the action of $G=\Aut X$
    then $|W||G_p|=|G|\le 84(g-1)$, where $G_p$ is the stabilizer of a Weierstrass point 
    $p$ so that $|W|\le 84(g-1)$.  By Theorem~\ref{thrm:farkas2}, $|W|
    =g^3-g$.  Hence $g^3-g\le 84(g-1)$and so $g\le 8$.  In general,
    transitivity implies that $|W|\leq M(g)$, the order of the largest
    group of automorphism for a surface of genus $ g$.  We now note that if $X=\mathcal U/K$  ($K$ a surface group)  has large enough automorphism group then $X$ must be platonic.  For $\Aut X\cong \Gamma/K$ and if $|\Aut X|>24(g-1)$, then by the Riemann-Hurwitz formula, the measure of a fundamental region for $\Gamma$ is < 1/12 and so $\Gamma$ must be a $(2,m,n)$ triangle group so that $X$ is platonic.
So the largest automorphism groups are automorphism groups of regular maps, 
 so we can consult \cite{conder-01} where we find that $M(6)=150, M(7)=504$, (for the 
    Macbeath surface) and $M(8)=336.$  However, $6^3-6=210>150$,
    and $8^3-8=504>336$ so we do not get transitivity for $g=6$ or
    $8$.  If $g=7$ then the largest automorphism group is $504$ which
    gives Macbeath's surface., the unique surface of genus 7 with  504 automorphisms, \cite {macbeath-69}. But we saw in the proof of
    Theorem~\ref{thrm:11} that the weights of the Weierstrass points
    of Macbeath's surface are $2$ and so are not simple.  The next
    highest order for an automorphism group in genus 7 is $288$ corresponding to
    a $(2,3,8)$ group.  As $288<7^3-7$ we cannot have transitivity.     

Thus the genus of $X$ is $2$, $3$, $4$ or $5$.  As $X$ is platonic we can use the lists in \cite{singerman-97} to get information about the Weierstrass points.

\begin{enumerate} 

\item If $g=5$  then $|G|=(5^3-5)|G_p|=120|G_p|$. The surface $X$ must then be platonic.  From \cite{singerman-97} we find that $|G|=120$ and the map has type $\{3,10\}$ and the Weierstrass points have weight 10. (In fact this surface is the two-sheeted covering of the icosahedron branched over the vertices that we found as example (5) in section 4.) 

\item If $g=4$ then $|G|=60|G_p|$.  From \cite{singerman-97} we find that either $|G|=120$ or $|G|=60$, ($G$ being $S_5$ or $A_5$.) In the first case the map has type $\{4,5\}$ and is the map underlying Bring's surface. In the second case we get a map of type $\{5,5\}$ on the the same surface. This is because of the inclusion relationship $(2,5,5)<(2,4,5)$, and so the first map is a truncation of the second map. (see \cite{singerman-03}.  By \cite{singerman-97} we see that the 60 edge centres are simple Weierstrass points.

\item
If $g=3$ then $|G|=24|G_p|$. If the surface is platonic then again we consult \cite{singerman-97} and we find that we must have $|G|=168$, the map has type $\{3,7\}$ so the surface is Klein's surface. However, we cannot guarantee that $X$ must be platonic. A possibility is that $|G|=24$, $G\cong S_4$  and $G$ lifts up to a Fuchsian group of signature $\{2,2,2,3\}$.  If this were to occur then $G$ would permute the Weierstrass points freely. Also, as the complex dimension of the Teichm\"uller space of $\{0,2,2,2,3\}$ groups is equal to 1 there will be an uncountable number of surfaces of genus 3 admitting $S_4$ as a group of automorphisms.  It is unclear whether there could be any where the group does not fix any Weierstrass points or whether the weight of the Weierstrass points is equal to one.
\end{enumerate}
\end{proof}

\section{A necessary condition for transitivity}
\label{sec:necessary_condition} 

Let $G$ be a group of automorphisms of a Riemann surface $X$. Then
$G$ lifts to a Fuchsian group $\Gamma$ with periods $m_1,\ldots
,m_r$, corresponding to elliptic elements $x_i$  ($i=1,\ldots,r$)
with fixed points $a_i$  ($i=1,\ldots,r$). Furthermore, there is a
homomorphism $\phi:\Gamma\rightarrow G$ with torsion-free kernel.
All points of $\mathcal U$ with non-trivial stabilizer have the form
$\gamma a_i$,  ($\gamma\in \Gamma$) this point being stabilized by
$\gamma x_i\gamma^{-1}$ of order $m_1$.  The point $\gamma a_i$
projects to a point in $X$  stabilized by a cyclic group of order
$m_i$, and all fixed points have this form. Now suppose that $G$ acts
transitively on the the Weierstrass points $W$ of $X$. Then $W$ is an
orbit so that the 60 edge-centres are simple Weierstrass points.
\begin{equation}
    |W||G_p|=|G|
\end{equation}
where $G_p$ is the stabilizer of $p$, so that $|G_p|=m_i$, one of the
periods of $\Gamma$.

As all points of $W$ have the same weight $w$, it follows from
Theorem~\ref{thrm:farkas1}, that 
\begin{equation}
    |W|w=g^3-g
\end{equation}

We can therefore deduce 

\begin{Th}
    If $G$ acts transitively on the Weierstrass points then 
    \begin{equation}
	w=\frac{|G_p|(g^3-g)}{|G|}.
    \end{equation}
\end{Th}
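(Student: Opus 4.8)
The plan is to combine the two displayed identities that immediately precede the statement, namely $|W|\,|G_p| = |G|$ and $|W|\,w = g^3 - g$. First I would make explicit the one conceptual point on which everything rests: because an automorphism of $X$ carries the set of Weierstrass points to itself and the gap sequence (hence the weight $w_p$) is an intrinsic biholomorphic invariant of a point, any two points in the same $G$-orbit have equal weight. Transitivity therefore forces every Weierstrass point to share a single common value $w$. This is exactly what licenses writing the total weight as $|W|\,w$ rather than as a sum of genuinely different weights, and it is the only step that uses the hypothesis in an essential way.

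Next I would invoke the orbit--stabilizer theorem. Since $W$ is a single $G$-orbit, we have $|W|\,|G_p| = |G|$, and hence
\begin{equation*}
    |W| = \frac{|G|}{|G_p|}.
\end{equation*}
As noted in the discussion above, $|G_p| = m_i$ is one of the periods of the lifted Fuchsian group $\Gamma$, so this quantity is genuinely finite and well defined.

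Then I would apply Theorem~\ref{thrm:farkas1}, which asserts $\sum_{p\in X} w_p = g^3 - g$. Every non-Weierstrass point contributes weight $0$, and the $|W|$ Weierstrass points each contribute $w$, so the sum collapses to $|W|\,w = g^3 - g$. Substituting $|W| = |G|/|G_p|$ and solving for $w$ yields
\begin{equation*}
    w = \frac{|G_p|\,(g^3-g)}{|G|},
\end{equation*}
as claimed.

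There is no serious obstacle here: once the uniform-weight observation is in place, the remainder is a one-line substitution of orbit--stabilizer into the Farkas total-weight formula. If anything counts as a ``main point'' it is making sure the two preceding equations are both correctly justified --- the first by transitivity plus orbit--stabilizer, the second by transitivity plus the invariance of weight under $\Aut X$ --- since the theorem is essentially their quotient.
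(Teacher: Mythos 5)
Your proposal is correct and follows exactly the paper's own argument: the paper derives the formula by combining the orbit--stabilizer identity $|W||G_p|=|G|$ with the uniform-weight consequence of Theorem~\ref{thrm:farkas1}, namely $|W|w=g^3-g$, and dividing. Your only addition is to spell out why transitivity forces all Weierstrass points to have equal weight (invariance of the gap sequence under automorphisms), which the paper leaves implicit.
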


(Thus the weight of a Weierstrass point is calculated in group
theoretic terms!  However we do have the group theoretic condition of
transitivity.)

\textbf{Example}.  If $G$ is a Hurwitz group of order $84(g-1)$,  a
homomorphic image of a triangle group $(2,3,7),$ then $m_i$ is one of
$2$, $3$ or $7$, so that if $G$ acts transitively on the Weierstrass
points then $g(g+1)$ is divisible by $12$, $28$ or $42$.

For example, the Hurwitz group of order $1344$ which acts on a
surface of genus $17$, does not act transitively.  Unfortunately,
this does not tell us anything about $\PSL(2, 13)$ acting on a
surface of genus $14$.

\section{bi-elliptic surfaces}

A Riemann surface $X$ is called \emph{$\gamma$-elliptic } if $X$
admits an involution~$J$ such that $X/\langle J \rangle$ has genus
$\gamma$.  $0$-hyperelliptic is just hyperelliptic, and
$1$-hyperelliptic is called bi-elliptic.  To investigate
transitivity on bi-elliptic surfaces we need the following
theorems of Kato~\cite{kato-79} and Garcia~\cite{garcia-86}.

\begin{Th}[Kato]
    Let $X$ be a non-hyperelliptic Riemann surface of genus $g\ge
    3$. Then for every point $p$ of $X$,
    \begin{equation*}
	0\le w_p\le 
	\begin{cases}
	    \displaystyle \frac {g(g-1)}{3} 
	    & 
	    \text{if $g=3,4,6,7,9,10$} \\[2.5ex]
	    \displaystyle \frac{g^2-5g+10}{2} 
	    & 
	    \text{otherwise}
	\end{cases}
    \end{equation*}
    where $w_p$ is the weight of $p$.
\end{Th}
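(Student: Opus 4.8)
\emph{Plan.} The lower bound is immediate: the gaps $1=\gamma_1<\gamma_2<\dots<\gamma_g$ are strictly increasing positive integers, so $\gamma_j\ge j$ for every $j$ and each summand $\gamma_j-j$ in the definition of $w_p$ is non-negative; hence $w_p\ge 0$.

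For the upper bound I would work with the non-gap semigroup $S$ at $p$ together with its counting function $\nu(n)=\#\{\alpha\in S:1\le\alpha\le n\}=h^0(np)-1$ (the dimension of the space of meromorphic functions with a pole of order at most $n$ at $p$, less the constants). The decisive structural fact is that non-hyperellipticity forces $1,2\notin S$: a non-gap equal to $1$ or $2$ would produce a meromorphic function of degree $1$ or $2$ and so exhibit $X$ as rational or hyperelliptic. Thus $\gamma_1=1$, $\gamma_2=2$ and $\nu(1)=\nu(2)=0$; equivalently $S$ has multiplicity at least $3$. Next I would convert the weight into a sum over $\nu$. Writing $w_p=\sum_{j}\gamma_j-\binom{g+1}{2}$ and applying Abel summation to $\sum_{n=1}^{2g-1}n\,[\,n\in S\,]$ (using $\nu(2g-1)=g-1$) gives the clean identity
\begin{equation*}
w_p=(2g-1)+\sum_{n=1}^{2g-2}\nu(n)-\binom{g+1}{2},
\end{equation*}
so the whole problem reduces to an upper bound for $\sum_{n=1}^{2g-2}\nu(n)$.

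Now the geometry enters through Clifford's theorem. Since $X$ is non-hyperelliptic its Clifford index is at least $1$, so whenever $np$ lies in the Clifford range (both $h^0(np)\ge 2$ and $h^1(np)\ge 2$) one has $\nu(n)=h^0(np)-1\le \tfrac12(n-1)$, a genuine improvement on the hyperelliptic count $\nu(n)=\lfloor n/2\rfloor$. Combined with $\nu(1)=\nu(2)=0$ and Riemann--Roch at the two ends of the range, summing these estimates bounds $\sum_{n=1}^{2g-2}\nu(n)$ and yields the general inequality $w_p\le (g^2-5g+10)/2$. To obtain the sharper value $g(g-1)/3$ for the listed genera I would note that these are exactly the $g$ with $3\mid g(g-1)$, i.e. $g\not\equiv 2\pmod 3$, for which $g(g-1)/3$ beats the generic bound: the inequality $g(g-1)/3\ge(g^2-5g+10)/2$ is equivalent to $g^2-13g+30\le 0$, i.e. $3\le g\le 10$, and together with the congruence this singles out $g\in\{3,4,6,7,9,10\}$. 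For these genera one checks directly, via the multiplicity-$3$ analysis of $S$, that no semigroup of genus $g$ can exceed $g(g-1)/3$, while a trigonal point—one whose non-gap semigroup is generated by $3$, namely $\langle 3,g+1\rangle$—realizes it, so the bound is attained.

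The step I expect to be the main obstacle is pinning down the constant in the summation, because the Clifford-index estimate $\nu(n)\le\tfrac12(n-1)$ fails precisely at the top of the range: when $np\sim K$ (for instance at a flex of a plane quartic, where $4p\sim K$ in genus $3$) Clifford holds with equality and $\nu(2g-2)$ can be as large as $g-1$. Handling this boundary behaviour correctly—using Riemann--Roch rather than Clifford for $n$ near $2g-2$—is exactly what separates the crude estimate $\binom{g-1}{2}$ from Kato's sharper $(g^2-5g+10)/2$, the two differing by precisely $g-4$. The remaining work is the finite case-check isolating the six special genera, which is routine once the two closed-form expressions are in hand.
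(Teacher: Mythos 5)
First, a remark on scope: the paper does not prove this statement at all --- it is quoted as Kato's theorem with a citation to \cite{kato-79} and used as a black box in the bi-elliptic section --- so there is no internal proof to compare with, and your proposal must be judged on its own terms. Your scaffolding is in good shape: the lower bound is right, the identity $w_p=(2g-1)+\sum_{n=1}^{2g-2}\nu(n)-\binom{g+1}{2}$ is correct (it recovers $w_p=\binom{g}{2}$ in the hyperelliptic case, where $\nu(n)=\lfloor n/2\rfloor$), non-hyperellipticity does force $\gamma_2=2$, and the semigroup $\langle 3,g+1\rangle$ does have weight exactly $g(g-1)/3$ and exists precisely when $g\not\equiv 2\pmod 3$. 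But the central step --- that Clifford index $\ge 1$ in the middle of the range plus Riemann--Roch at the ends yields the ``general inequality'' $w_p\le(g^2-5g+10)/2$ --- is not merely unproven: it is false, and your own examples refute it. For $g=4,6,7,9$ one has $g(g-1)/3>(g^2-5g+10)/2$ (e.g.\ $4>3$ in genus $4$), and the trigonal points you exhibit satisfy every hypothesis of your estimates (trigonal curves have Clifford index exactly $1$), so no bound derived uniformly from those estimates can fall below their weight. What your estimates actually give is $\nu(1)=\nu(2)=0$, $\nu(n)\le\lfloor(n-1)/2\rfloor$ for $3\le n\le 2g-4$, and $\nu(2g-3)\le g-2$, $\nu(2g-2)\le g-1$ by Riemann--Roch, whence $\sum_{n=1}^{2g-2}\nu(n)\le g^2-3g+3$ and $w_p\le(g^2-3g+4)/2$. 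This equals $g(g-1)/3$ for $g=3,4$ (so your argument does prove those two cases) but exceeds Kato's bound by $g-3$ for every $g\ge 5$. Moreover the deficit is not located at the top of the range, as you suggest: the extremal bi-elliptic semigroup $\{0,4,6,\dots,2g-4,2g-3,2g-2\}\cup[2g,\infty)$ attains both boundary values $g-2$ and $g-1$, so no refinement there can help. The loss is spread over the odd levels, and it comes from semigroup closure interacting with Clifford: e.g.\ $\nu(3)=1$ forces $3\in S$, hence $6\in S$, and then $\nu(5)=2$ would force a third non-gap in $[1,6]$, contradicting $\nu(6)\le 2$; so the term-by-term bound $\lfloor(n-1)/2\rfloor$ cannot be attained at all levels simultaneously, and a term-by-term summation can never see this.

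The missing idea is a dichotomy on the first non-gap. If $3\in S$, the gaps in each residue class mod $3$ are initial segments (closure under adding $3$), and an elementary optimization over such semigroups gives the true trigonal maxima, $g(g-1)/3$ for $g\not\equiv 2$ and $g(g-2)/3$ for $g\equiv 2\pmod 3$; this computation, not a numerical comparison of two closed forms, is where the bound for the listed genera comes from. If $3\notin S$, one must prove $\nu(n)\le(n-3)/2$ at odd $n$ in the middle range --- exactly recovering the missing $g-3$ --- which requires either analogous bookkeeping (gaps mod $4$ and closure) or genuine geometry beyond plain Clifford, such as the Martens--Mumford refinement that a divisor of Clifford index $1$ forces the curve to be trigonal or a plane quintic (note the plane quintic genus, $6$, sits on Kato's list). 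Combining the two cases then gives the theorem: for $g\in\{3,4,6,7,9,10\}$ the trigonal maximum dominates, for $g=5,8$ the semigroup $\langle 3,g+1\rangle$ does not exist, and for $g\ge 11$ the bi-elliptic bound dominates. Finally, your sentence ``no semigroup of genus $g$ can exceed $g(g-1)/3$'' cannot stand as written --- the hyperelliptic semigroup has weight $g(g-1)/2$ --- it needs the multiplicity-$\ge 3$ hypothesis and an actual argument covering multiplicity $\ge 4$ as well. As it stands, your proposal proves $w_p\le(g^2-3g+4)/2$, i.e.\ the theorem only for $g=3,4$.
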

 
Kato also exhibited a link between this bound and
bi-elliptic surfaces.
  
\begin{Th}
    Let $X$ be a Riemann surface of genus $g\ge 11$.  Then~$X$ is
   bi-elliptic if and only if there exists $p\in X$ with
    weight $w_p$ such that
    \begin{equation*}
	\frac{g^2-5g+6}{2}\le w_p< \frac{g^2-g}{2}.
    \end{equation*}
    In this case the possible values of $w_p$ are $\frac{g^2-5g+6}{2}$
    or $\frac{g^2-5g+10}{2}$.
\end{Th}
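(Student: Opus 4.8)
The plan is to prove both implications by analysing the vanishing sequence of the canonical system at a point $p$. Recall that if a basis of $H^0(X,\omega_X)$ vanishes at $p$ to orders $0\le a_1<a_2<\cdots<a_g\le 2g-2$, then the gaps are $\gamma_i=a_i+1$ and, writing $w_p=\sum_{i=1}^g(\gamma_i-i)$, one has $w_p=\sum_i a_i-\binom{g}{2}$. First I would dispose of the hyperelliptic case: on a hyperelliptic surface the only points of positive weight are the $2g+2$ branch points, each of weight $\binom{g}{2}=\frac{g^2-g}{2}$, which the strict bound $w_p<\frac{g^2-g}{2}$ excludes, while every other point has weight $0<\frac{g^2-5g+6}{2}$ (positive for $g\ge 4$). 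Hence in either direction $X$ may be assumed non-hyperelliptic, so that Kato's first theorem applies and gives $w_p\le\frac{g^2-5g+10}{2}$ for $g\ge 11$.

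For the forward implication, suppose $X$ is bi-elliptic with involution $J$ and quotient $\pi\colon X\to E=X/\langle J\rangle$, where $E$ has genus $1$; by Riemann--Hurwitz $\pi$ has $2g-2$ ramification points. Fixing such a point $p$ over $q=\pi(p)$, I would split $H^0(X,\omega_X)$ into its $J$-invariant part (one-dimensional, spanned by the pullback $\pi^*\omega$ of the holomorphic differential on $E$) and its anti-invariant part (dimension $g-1$). In a local coordinate $t$ at $p$ with $\tau=t^2$ at $q$, the invariant differential $\pi^*\omega$ vanishes to order $1$, while an anti-invariant differential built from a section $s$ of the degree-$(g-1)$ line bundle $L$ on $E$ with $L^{2}=\mathcal O_E(B)$ ($B$ the branch divisor) vanishes to the even order $2\,\mathrm{ord}_q(s)$. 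Since the sections of $L$ have vanishing sequence $\{0,1,\dots,g-2\}$ at a general $q$, and $\{0,1,\dots,g-3,g-1\}$ exactly when $L\cong\mathcal O_E((g-1)q)$, the canonical vanishing sequence at $p$ is either $\{0,1,2,4,6,\dots,2g-4\}$ or $\{0,1,2,4,\dots,2g-6,2g-2\}$. Summing, these give $w_p=\frac{g^2-5g+6}{2}$ and $w_p=\frac{g^2-5g+10}{2}$ respectively, both lying in the asserted range since $\frac{g^2-5g+10}{2}<\frac{g^2-g}{2}$ for $g\ge 3$.

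For the converse I would begin from the squeeze: the hypothesis together with Kato's bound confines $w_p$ to the closed interval $\bigl[\frac{g^2-5g+6}{2},\,\frac{g^2-5g+10}{2}\bigr]$. The core of the argument is a rigidity statement: a non-hyperelliptic surface carrying a point of such near-maximal weight must have canonical vanishing sequence at $p$ equal to one of the two sequences found above. This simultaneously pins $w_p$ to the two extreme values (ruling out the intermediate $\frac{g^2-5g+8}{2}$) and shows that every even integer $\ge 4$ is a non-gap at $p$; in particular the first non-gap is $4$, so $|4p|$ is a $g^1_4$. For $g\ge 11$ this pencil cannot be simple (a simple $g^1_4$ forces genus below Castelnuovo's bound), so by the Castelnuovo--Severi inequality it is composed with an involution $J$, giving $\pi\colon X\to Y=X/\langle J\rangle$ of degree $2$ with $p$ a ramification point. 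The even non-gaps of $p$ are then twice the non-gaps of $\pi(p)$, so the latter form the semigroup $\{0,2,3,4,\dots\}$ with the single gap $1$; by the Weierstrass Gap Theorem $g(Y)=1$, and $X$ is bi-elliptic.

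The hard part will be the rigidity step. Deducing the entire gap sequence at $p$ from a single weight inequality requires an extremal analysis of $\sum_i a_i$ subject to $a_1=0$, the strict monotonicity $a_{i+1}>a_i$, and the semigroup constraints on the non-gaps, showing that the two listed sequences are the only ones meeting the weight lower bound while respecting Kato's upper bound. The second delicate point is globalisation: passing from this local semigroup data to an actual degree-$2$ morphism. I would handle this through the $g^1_4$ produced above, using Castelnuovo--Severi both to guarantee that the pencil is compounded and to verify that $p$ is genuinely a ramification point, after which the gap-halving computation forces the quotient genus to be $1$ (the same inequality is what prevented $X$ from being simultaneously hyperelliptic in the opening reduction).
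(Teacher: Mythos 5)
First, a caveat about the comparison you asked for: the paper contains \emph{no proof} of this statement at all --- it is quoted as a theorem of Kato, with a citation to \cite{kato-79} --- so your proposal can only be judged on its own merits and against what any complete proof must contain. Your forward implication is correct and is the standard argument: splitting $H^0(X,\omega_X)$ into the $J$-invariant line $\pi^*H^0(E,\omega_E)$ and the $(g-1)$-dimensional anti-invariant part, noting that at a ramification point $p$ the former vanishes to order $1$ while anti-invariant differentials vanish to the even orders $2\,\mathrm{ord}_q(s)$ for sections $s$ of the degree-$(g-1)$ bundle $L$ on $E$, and then reading off the two possible vanishing sequences from Riemann--Roch on $E$; the weight computations giving $\frac{g^2-5g+6}{2}$ and $\frac{g^2-5g+10}{2}$ check out.

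The converse is where the substance of the theorem lies, and there your outline has a genuine gap. The ``rigidity'' step --- that $w_p\ge\frac{g^2-5g+6}{2}$ on a non-hyperelliptic surface forces the gap sequence at $p$ to be $\{1,2,3,5,7,\dots,2g-3\}$ or $\{1,2,3,5,\dots,2g-5,2g-1\}$ --- is the heart of the matter, and you only announce it. It is true and provable by elementary means (once the first non-gap is shown to be $4$, the gap set is downward-closed in each residue class mod $4$, hence encoded by chain lengths $k_1+k_2+k_3=g$; maximizing the gap sum $2(k_1^2+k_2^2+k_3^2)-k_1+k_3$ subject to the semigroup inequalities among the minimal non-gaps and to $\gamma_g<2g$ shows that only the two stated sequences reach the required sum $g^2-2g+3$, for $g\ge 11$), but without this analysis the proof has not begun.

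More seriously, the globalisation step is wrong as stated. ``A simple $g^1_4$ forces genus below Castelnuovo's bound'' is false: tetragonal curves whose $g^1_4$ is not composed with any involution exist in every genus, and the Castelnuovo--Severi inequality requires \emph{two} coverings whose product map is birational --- a single pencil never supplies this. If you repair it by pairing $f$ (polar divisor $4p$) with $h$ (polar divisor $6p$), birationality of $(f,h)$ onto a bidegree-$(6,4)$ curve in $\mathbf{P}^1\times\mathbf{P}^1$ gives only $g\le 15$, which leaves uncovered exactly the range $11\le g\le 15$ that the hypothesis $g\ge 11$ is supposed to handle. A correct route goes through the complete series $|6p|$, a $g^2_6$: for $g\ge 11$ its image in $\mathbf{P}^2$ cannot be a sextic, since then $X$ would be birational to a plane sextic and $g\le 10$ (this is precisely where $g\ge 11$ enters); it cannot be a conic, since then $f$ would factor through a degree-$3$ map, impossible as $3\nmid 4$; so the map factors through a degree-$2$ cover $\pi\colon X\to Y$ onto the normalization of a cubic. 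Then $f=\pi^*F$ and $h=\pi^*H$ force $p$ to be a ramification point and give $\pi(p)$ the non-gaps $2$ and $3$ on $Y$, whence $g(Y)\le 1$, and $Y$ rational is excluded because $2$ is a gap at $p$. Note finally that your closing claim --- that the even non-gaps at $p$ are exactly twice the non-gaps at $\pi(p)$ --- needs $h$, not just $f$, to descend to $Y$; this is automatic in the $|6p|$ factorization but is not automatic if you only know that the pencil $|4p|$ is composed with the involution.
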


Garcia~\cite{garcia-86} proved that both these bounds are attained. 

\begin{Th}
    Let $X$ be an bi-elliptic Riemann surface of genus
    $g\ge 11$, $g\not =15$.  If there exists $p\in X$ such that
    $w_p=\frac{g^2-5g+10}{2}$ or $w_p=\frac{g^2-5g+6}{2}$, then $\Aut
    X$ does not act transitively on the Weierstrass points $W$ of $X$.
\end{Th}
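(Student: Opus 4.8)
The plan is to argue by contradiction, reducing transitivity to a divisibility constraint on $|W|$ that fails for exactly the stated genera. Suppose $\Aut X$ acts transitively on $W$. Since $g\ge 11$ makes both $\tfrac{g^2-5g+10}{2}$ and $\tfrac{g^2-5g+6}{2}$ strictly positive, the given point $p$ has nonzero weight and so $p\in W$; transitivity then forces every point of $W$ to carry the common weight $w=w_p$. Substituting into the total-weight identity of Theorem~\ref{thrm:farkas1}, namely $\sum_{q\in X}w_q=g^3-g$, yields $|W|\,w=g^3-g$, so that $|W|=\dfrac{g^3-g}{w}$ must be a positive integer. The entire proof then reduces to showing that this integrality is impossible for each of the two admissible values of $w$ when $g\ge 11$ and $g\ne 15$.

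First I would handle $w=\tfrac{g^2-5g+6}{2}=\tfrac{(g-2)(g-3)}{2}$, which factors cleanly. Here $|W|=\dfrac{2g(g-1)(g+1)}{(g-2)(g-3)}$, and polynomial division rewrites this as $2g+10+\dfrac{12(3g-5)}{(g-2)(g-3)}$, so integrality is equivalent to $(g-2)(g-3)\mid 12(3g-5)$. Because $\gcd(g-2,g-3)=1$, with $12(3g-5)\equiv 12\pmod{g-2}$ and $12(3g-5)\equiv 48\pmod{g-3}$, this splits into the two conditions $(g-2)\mid 12$ and $(g-3)\mid 48$. For $g\ge 11$ the first forces $g-2=12$, i.e. $g=14$, but then $g-3=11\nmid 48$; hence this value of $w$ is excluded for every $g\ge 11$, with no exceptions.

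The remaining value $w=\tfrac{g^2-5g+10}{2}$ is the delicate one, since $g^2-5g+10$ is irreducible and the clean coprime splitting is unavailable, so I expect this to be the main obstacle. Writing $|W|=2g+10+\dfrac{28g-100}{g^2-5g+10}$, for $g\ge 30$ one verifies $0<28g-100<g^2-5g+10$, so the fractional part lies strictly between $0$ and $1$ and $|W|$ cannot be an integer. The band $11\le g\le 29$ must then be dispatched by a finite direct check of whether $g^2-5g+10$ divides $28g-100$: this fails for all of them except precisely $g=15$, where $\tfrac{28\cdot15-100}{160}=2$ and $|W|=42$ is a genuine integer. That single exception is exactly why the hypothesis removes $g=15$; the arithmetic simply does not rule it out, so it has to be set aside by hand. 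Combining the two cases, integrality of $|W|$ fails for all $g\ge 11$ with $g\ne 15$, contradicting transitivity and establishing the theorem.
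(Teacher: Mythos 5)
Your proposal is correct and follows essentially the same route as the paper: argue by contradiction, use weight-invariance under automorphisms to get $|W|\,w=g^3-g$, perform the identical polynomial division $|W|=2g+10+\frac{28g-100}{g^2-5g+10}$, and show the fractional term can only be an integer at $g=15$ (for $g\ge 11$), with the $\frac{(g-2)(g-3)}{2}$ case ruled out entirely. The only differences are tactical: where you bound the tail for $g\ge 30$ and check $11\le g\le 29$ directly, the paper notes that $\nu(g)=\frac{28g-100}{g^2-5g+10}$ is decreasing with $\nu(11)<3$, so $\nu(g)\in\{1,2\}$, and solves the two resulting quadratics (giving $g=4$ or $g=15$); and your coprime splitting $(g-2)\mid 12$, $(g-3)\mid 48$ is a full write-up of the second case, which the paper dispatches with a single ``similarly.''
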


\begin{proof}
    We prove this by contradiction so suppose that $\Aut X$ does act
    transitively on $W$.  If $w_p=\frac{g^2-5g+10}{2}$ then all
    Weierstrass points of $X$ have this weight.  Thus
    $\frac{g^2-5g+10}{2}$ divides $g^3-g$ the quotient being $|W|$.
    Thus
    \begin{equation*}
	|W|=2g+10+\frac{28g-100}{g^2-5g+10}.
    \end{equation*}
    Therefore $\nu(g)=\frac{28g-100}{g^2-5g+10}$ must be an integer.
    However, $\nu(g)$ decreases for $g>11$ and for $g=11$, $\nu(g)<3$
    and so $\nu(g)=1$ or $2$.  There is no integral solution if $g=1$
    and $\nu(g)=2$ implies that $g=4$ or $g=15$.  Thus for $g>11$,
    $g\neq 15$ we cannot get a transitive action.

    Similarly $\frac{g^2-5g+6}{2}$ cannot divide $g^3-g$ for $g\ge
    11$.
\end{proof}

We thus have

\begin{Th}
    Let $X$ be an bi-ellipticsurface of genus $g>11$,
    $g\neq 15$.  Then $\Aut X$ does not act transitively on the
    Weierstrass points of $X$.
\end{Th}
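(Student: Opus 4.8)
The plan is to recognize that this statement follows immediately by combining Kato's characterization of bi-elliptic surfaces with the preceding theorem, so that essentially no new work is required beyond checking that the hypotheses line up. The content has already been established in the two previous results; the final theorem is their packaging.

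First I would invoke Kato's characterization theorem above (the one asserting that a surface of genus $g\ge 11$ is bi-elliptic \emph{if and only if} it carries a point of weight in the interval $\frac{g^2-5g+6}{2}\le w_p<\frac{g^2-g}{2}$). Since $X$ is bi-elliptic and $g>11$ certainly satisfies the hypothesis $g\ge 11$, the forward direction of that equivalence guarantees the existence of a point $p\in X$ whose weight lies in this interval. Crucially, the same theorem pins down the only admissible values to be $w_p=\frac{g^2-5g+6}{2}$ or $w_p=\frac{g^2-5g+10}{2}$. Thus the existence hypothesis required by the preceding theorem is automatically satisfied for \emph{every} bi-elliptic surface in this genus range, with no case analysis needed.

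Next I would apply the preceding theorem directly. Its hypotheses are exactly that $X$ is bi-elliptic, that $g\ge 11$ and $g\neq 15$, and that some point of $X$ carries one of the two distinguished weights; all of these now hold, since $g>11$ gives $g\ge 11$, the excluded value $g=15$ is excluded here as well, and the existence of the distinguished weight was just supplied by Kato's characterization. The conclusion of that theorem is precisely that $\Aut X$ does not act transitively on $W$, which is what we want.

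The only point requiring any care — and it is the nearest thing to an obstacle — is the bookkeeping on the genus constraints: one must confirm that the restriction $g>11$, $g\neq 15$ sits inside the validity range $g\ge 11$, $g\neq 15$ common to both Kato's characterization and the preceding theorem, so that neither result is ever invoked outside its hypotheses. Since this containment is immediate, the argument reduces to a two- or three-line deduction, which I would write out explicitly rather than as a standalone display.
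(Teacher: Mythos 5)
Your proof is correct and is exactly the argument the paper intends: the paper gives no separate proof for this theorem, introducing it only with ``We thus have,'' i.e.\ as the immediate combination of Kato's characterization (which supplies a point of weight $\frac{g^2-5g+6}{2}$ or $\frac{g^2-5g+10}{2}$ on any bi-elliptic surface of genus $g\ge 11$) with the preceding non-transitivity theorem. Your explicit check that the genus constraints $g>11$, $g\neq 15$ sit inside the hypotheses of both results is the only content needed, and you have it right.
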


Using similar results of Garcia~\cite{garcia-86} we can also show that
with the possible exception of a small number of values of $g$, that
no $2$-hyperelliptic surface of genus $g$ can have an automorphism
group that acts transitively on the Weierstrass points.

%%Garcia \cite{garcia-86} also found the following result about
%2-hyperelliptic surfaces and the weights of their Weterstrass points. 

%\begin{Th}  A surface $X$ of genus $g\ge 23$ is 2-hyperelliptic  if
%and only if there exists $p\in X$ such that
%$$\frac{g^2-9g+26}{2}<w_p<\frac{g^2-5g+6}{2}.$$
%In this case the the possible values of $w_p$ are

%$$\frac{g^2-9g+20}{2}, \ \frac{g^2-9g+24}{2}, \
%$$\frac{g^2-9g+28}{2},\  \frac{g^2-9g+36}{2}. $$
%and all such four possibilities occur.

%\end{Th}

%\begin{Co} Let $X$ be a 2-hyperelliptic surface of genus $g>23,
%$\not= 33,56.$  Then $\Aut X$ does not act transitively on the
%Weierstrass points of $X$.
%\end{Co}
%\begin{proof} The proof follows by the method given in Theorem 16.
%The exceptional cases $g=33,56$ occur if $w_p= \frac{g^2-9g+24}{2},
%\frac{g^2-9g+28}{2}$ respectively.
%\end{proof}

\section{Fermat surfaces}

The Fermat surface $\mathbf{F}_n$ is the
Riemann surface of the projective algebraic curve 
\begin{equation}
\{x,y,z) \mid x^n+y^n+z^n=0\}
\end{equation}

Let $\Gamma(n,n,n)$ be the triangle group with the presentation 
{$\langle x,y,z|x^n=y^n=z^n=xyz=1\rangle$.}
 If we abelianize this group we get ${\bf Z}_n\oplus{ \bf Z}_n$ so that the commutator subgroup $K_n$ has index $n^2$. It is known that $\bf{F}_n \cong {\mathcal U}/K_n,$  [10]. We then find that $K_n$ has genus $(n-1)(n-2)/2$. As $K_n$ is characteristic in $\Gamma(n,n,n)$ and as $\Gamma(n,n,n)$ is normal in the maximal triangle group $\Gamma(2,3,2n)$ with $\Gamma(2,3,2n)/\Gamma(n,n,n)\cong S_3$, it follows that $Aut F_n\cong ({\bf Z}_n\oplus{ \bf Z}_n)\rtimes S_3$

We now describe how $(\Z_n\oplus \Z_n)\rtimes S_3$ acts on the points
of the Fermat curve $\F_n$.  We first define an action of $\Z_n^3$ on
$\F_n$.  If $(a,b,c)\in \Z_n^3$ and $(x,y,z)\in \F_n$ we let
$(a,b,c)\wedge (x,y,z)=(e^{2\pi i(a/n)}, e^{2\pi i(b/n)}, e^{2\pi
i(c/n)})$.  The kernel of this action consists of the set of points
$(a,b,c)$ where $a=b=c$ which form a group isomorphic to $\Z_n$ and so
if we factor out this subgroup we get an action of $\Z_n\oplus \Z_n$ as
a group of automorphisms of $\F_n$.  Also there is an action of $S_3$
by permuting the coordinates and so we find an action of $\F_n \cong
(\Z_n\oplus \Z_n)\rtimes S_3$ as a group of automorphisms of $\F_n$.

\subsection*{Weierstrass points on the Fermat curve.}  

Let $\alpha= e^{2\pi i/n}$ and $\beta=e^{\pi i/n}$. Then the $3n$
points of $\F_n$ of the form 
\begin{equation*}
    (0,\alpha^j, \beta), (\beta, 0, \alpha^j), (\alpha^j, \beta, 0)
\end{equation*}
($j=0,\ldots, n-1)$, are called the trivial points of $\bf{F_n}$ and
Hasse~\cite{hasse-75} showed that these points are Weierstrass points
of weight
\begin{equation*}
    (n-1)(n-2)(n-3)(n+4)/24.
\end{equation*}
When $n=4$ this is equal to $2$ and so the total weight is $24$.
By~\eqref{eq:farkas} in section~\ref{sec:prelim}, this
shows that these are all the Weierstrass points of~$\F_4$.  For $n\ge
5$ there are more Weierstrass points, called the Leopoldt
points~\cite{rohrlich-82}.  These are the $3n^2$ points of the form
$(\gamma,\beta_1,\beta_2)$, where $\gamma^n=2,
\beta_1^n=\beta_2^n=-1$.  According to Towse \cite{towse-97}, their
weight is $\ge (n-1)(n-3)/8$ if $n$ is odd and $(n-2)(n-4)/8$ if $n$
is even and this inequality is an equality if $n\le 8$.

When $n=5$, the total weight of the Leopoldt points is equal to~$1$
and so the total weights of the trivial points and the Leopoldt points
is equal to $15\cdot 9 + 75\cdot 1=210=6^3-6$ and so by
Theorem~\ref{thrm:farkas2}, the trivial points and the Leopoldt points
include all of the Weierstrass points.  When $n\ge 6$, a similar count
shows that there must be further Weierstrass points.

\begin{Th}
    The automorphism group of $\F_n$ acts transitively on the
    Weierstrass points if and only if $n=4$.
\end{Th}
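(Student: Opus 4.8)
The plan is to prove both directions by analysing the orbit structure of $G=\Aut\F_n=(\Z_n\oplus\Z_n)\rtimes S_3$ on the Weierstrass points, and the crucial observation is that the $3n$ trivial points are exactly the points of $\F_n$ at which one of the three coordinates vanishes. Indeed, setting $x=0$ forces $y^n=-z^n$, which yields precisely the $n$ projective points $(0,\alpha^j,\beta)$, and symmetrically for $y=0$ and $z=0$. Since the torus $\Z_n\oplus\Z_n$ only scales the coordinates, and so preserves the locus where a coordinate is zero, while $S_3$ permutes the coordinates, the set $T$ of trivial points is $G$-invariant. First I would record this reformulation, since it drives the whole argument.

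Next I would establish that $G$ acts transitively on $T$. Within the family $x=0$, applying the torus element $(a,b,c)$ to $(0,\alpha^j,\beta)$ shifts the parameter $j$ by $b-c \bmod n$, so the abelian part alone is already transitive on the $n$ points of that family; the three families are then interchanged transitively by the coordinate permutations in $S_3$. This gives transitivity of $G$ on all $3n$ trivial points.

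For the case $n=4$ I would invoke Hasse's weight computation quoted above: each trivial point has weight $2$, so the total weight carried by $T$ is $12\cdot 2=24=g^3-g$ for $g=3$; by Theorem~\ref{thrm:farkas1} the trivial points are therefore \emph{all} the Weierstrass points. Combined with the transitivity just shown, $\Aut\F_4$ acts transitively on $W$. For $n\ge5$ the argument is almost immediate from invariance: the Leopoldt points $(\gamma,\beta_1,\beta_2)$ with $\gamma^n=2$ and $\beta_1^n=\beta_2^n=-1$ have all coordinates nonzero, so they are Weierstrass points lying outside $T$; hence $T$ is a nonempty \emph{proper} $G$-invariant subset of $W$, which prevents $W$ from being a single $G$-orbit. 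Thus no transitive action can occur for $n\ge5$. (The degenerate range $n\le3$, where $g\le1$ and there are no Weierstrass points, is excluded at the outset.)

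I expect the only real work to be the explicit verification of transitivity on $T$ together with the identification of $T$ with the zero-coordinate locus; both are elementary but must be done carefully. The $n\ge5$ direction is essentially free once one knows, as cited from the literature, that the Leopoldt points are genuine Weierstrass points for every $n\ge5$, so the main obstacle is not an estimate but correctly pinning down the invariant-subset structure. As an aside, one could instead run the necessary condition of Section~\ref{sec:necessary_condition}: a transitive orbit forces all Weierstrass points to share a common weight, and for $5\le n\le8$ the trivial weight $(n-1)(n-2)(n-3)(n+4)/24$ visibly differs from the Leopoldt weight, giving an alternative contradiction; the orbit argument is preferable because it covers all $n\ge5$ uniformly without requiring weight estimates for large $n$.
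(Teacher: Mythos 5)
Your proposal is correct and follows essentially the same route as the paper: the trivial points form a single orbit, for $n=4$ Hasse's weight count shows they exhaust the Weierstrass points, and for $n\ge 5$ the Leopoldt points are Weierstrass points lying outside this set, so transitivity fails. The only difference is that you supply details the paper dismisses as ``clear'' (identifying the trivial points with the zero-coordinate locus and computing the transitive action explicitly), and your invariant-subset phrasing for $n\ge 5$ even avoids the paper's unproved claim that the Leopoldt points themselves form a single orbit.
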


\begin{proof}  
    We consider the action of the automorphism group of the Fermat
    curve defined above.  Clearly the set of trivial points is an
    orbit.  The set of Leopoldt points is also an orbit and these
    orbits are distinct.  For $n=4$ the only Weierstrass points are
    the trivial ones and thus we get transitivity.  For $n>4$, there
    are Weierstrass points of Leopoldt type and so we do not get
    transitivity on all the Weierstrass points.
\end{proof}

\begin{notes}\mbox{}\par
    \begin{enumerate}
        \item As Rohrlich remarked~\cite{rohrlich-82}, the automorphisms like
        \begin{equation*}
        (x,y,z)\mapsto (\alpha x, y,z)
        \end{equation*}
	(or multiply the 2nd or 3rd coordinates by $\alpha$) are  automorphisms that fix  $n$ trivial points
	 and hence for $n>4$ we know these
	are Weierstrass points by Schoeneberg's Theorem.  Also, we can
	see that the automorphim $(x,y,z)\mapsto (x,z,y)$ fixes the
	Leopoldt points of the form $(\gamma,\beta, \beta)$ showing
	these are Weierstrass points.  As all Leopoldt points form an
	orbit under $\Aut \F_n$ we see that they are all Weierstrass
	points.
        
        \item Because of the inclusion relationship
        $\Gamma(4,4,4)<\Gamma(2,3,8)$ we can deduce that $\F_4$ is the
        Platonic surface of type $\{8,3\}$ in
        section~\ref{sec:platonic_with_low_genus}.
    \end{enumerate}
\end{notes}

\section{Conclusion} 

We showed in Theorem~\ref{thrm:7} that there is one family of
hyperelliptic surfaces, namely the Accola--Maclachlan surfaces, where
the automorphism group acts transitively on the Weierstrass points.
However for the families of non-hyperelliptic surfaces we have
considered there seem to be few examples and the surfaces we have
found which have this property such as the Klein surface, Bring's
surface, the Macbeath surface, the Fermat curve $\F_4$ are important for other reasons.  It
would be interesting to search for other non-hyperelliptic Riemann surfaces whose
automorphism group acts transitively on the Weierstrass points.

%
% --- Reference list using http://www.ams.org/tex/amsrefs.html
%

\begin{bibdiv}
\begin{biblist}

\bib{accola-68}{article}{
   author={Accola, R. D. M.},
   title={On the number of automorphisms of a closed Riemann surface},
   journal={Trans. Amer. Math. Soc.},
   volume={131},
   date={1968},
   pages={398--408},
   issn={0002-9947},
   review={\MR{0222281 (36 \#5333)}},
}

\bib{conder-85}{article}{
   author={Conder, M. D. E.},
   title={The symmetric genus of alternating and symmetric groups},
   journal={J. Combin. Theory Ser. B},
   volume={39},
   date={1985},
   number={2},
   pages={179--186},
   issn={0095-8956},
   review={\MR{811121 (87e:20072)}},
}

\bib{conder-01}{article}{
   author={Conder, M.},
   author={Dobcs{\'a}nyi, Peter},
   title={Determination of all regular maps of small genus},
   journal={J. Combin. Theory Ser. B},
   volume={81},
   date={2001},
   number={2},
   pages={224--242},
   issn={0095-8956},
   review={\MR{1814906 (2002f:05088)}},
}

\bib{coxeter-39}{article}{
   author={Coxeter, H. S. M.},
   title={The abstract groups $G\sp {m,n,p}$},
   journal={Trans. Amer. Math. Soc.},
   volume={45},
   date={1939},
   number={1},
   pages={73--150},
   issn={0002-9947},
   review={\MR{1501984}},
}

\bib{coxeter-80}{book}{
   author={Coxeter, H. S. M.},
   author={Moser, W. O. J.},
   title={Generators and relations for discrete groups},
   series={Ergebnisse der Mathematik und ihrer Grenzgebiete [Results
in
   Mathematics and Related Areas]},
   volume={14},
   edition={4},
   publisher={Springer-Verlag},
   place={Berlin},
   date={1980},
   pages={ix+169},
   isbn={3-540-09212-9},
   review={\MR{562913 (81a:20001)}},
}

\bib{farkas-80}{book}{
  author={Farkas, H. M.},
  author={Kra, I.},
  title={Riemann surfaces},
  series={Graduate Texts in Mathematics},
  volume={71},
  publisher={Springer-Verlag},
  place={New York},
  date={1980},
  pages={xi+337},
  isbn={0-387-90465-4},
   review={\MR{583745 (82c:30067)}},
}

\bib{garcia-86}{article}{
   author={Garc{\'{\i}}a, A.},
   title={Weights of Weierstrass points in double coverings of curves
of
   genus one or two},
   journal={Manuscripta Math.},
   volume={55},
   date={1986},
   number={3-4},
   pages={419--432},
   issn={0025-2611},
   review={\MR{836874 (87f:14016)}},
}

\bib{hasse-75}{book}{
   author={Hasse, H.},
   title={Mathematische Abhandlungen. Band 2},
   language={German},
   note={Herausgegeben von Heinrich Wolfgang Leopoldt und Peter
Roquette},
   publisher={Walter de Gruyter, Berlin-New York},
   date={1975},
   pages={xv+525},
   review={\MR{0465757 (57 \#5648b)}},
}

\bib{jones-78}{article}{
   author={Jones, G. A.},
   author={Singerman, D.},
   title={Theory of maps on orientable surfaces},
   journal={Proc. London Math. Soc. (3)},
   volume={37},
   date={1978},
   number={2},
   pages={273--307},
   issn={0024-6115},
  review={\MR{0505721 (58 \#21744)}}  
}

\bib{jones-96}{article}{
   author={Jones, G. A.},
   author={Singerman, D.},
   title={Bely\u\i\ functions, hypermaps and Galois groups},
   journal={Bull. London Math. Soc.},
   volume={28},
   date={1996},
   number={6},
   pages={561--590},
   issn={0024-6093},
   review={\MR{0505721 (58 \#21744)}}  
}

\bib{jones-00}{article}{
  author={Jones, G. A.},
  author={Surowski, D. B.},
  title={Regular cyclic coverings of the Platonic maps},
  journal={European J. Combin.},
  volume={21},
  date={2000},
  number={3},
  pages={333--345},
  issn={0195-6698},
  review={\MR{1750888 (2001a:05076)}},
}

\bib{kato-79}{article}{
   author={Kato, T.},
   title={Non-hyperelliptic Weierstrass points of maximal weight},
   journal={Math. Ann.},
   volume={239},
   date={1979},
   number={2},
   pages={141--147},
   issn={0025-5831},
   review={\MR{519010 (80b:30040)}},
}

\bib{keem-10}{article}{
	author={Keem, C.},
	author={Martens, G.},
	title={On curves with all Weierstrass points of maximal weight},
	journal={Arch. Math.},
	volume={94},
	date={2010},
	pages={339--349},
}

%\bib{lewittes-63}{article}{
%   author={Lewittes, Joseph},
  % title={Gaps at Weierstrass points for the modular group},
  % journal={Bull. Amer. Math. Soc.},
   %volume={69},
  % date={1963},
  % pages={578--582},
 %  issn={0002-9904},
 %  review={\MR{0148621 (26 \#6128)}},
%}
\bib{lewittes-63}{article}{
   author={Lewittes, Joseph},
   title={Automorphisms of compact Riemann surfaces},
   journal={Amer. J. Math.},
   volume={85},
   date={1963},
   pages={734--752},
   issn={0002-9327},
   review={\MR{0160893 (28 \#4102)}},
}

\bib{macbeath-65}{article}{
   author={Macbeath, A. M.},
   title={On a curve of genus $7$},
   journal={Proc. London Math. Soc. (3)},
   volume={15},
   date={1965},
   pages={527--542},
   issn={0024-6115},
   review={\MR{0177342 (31 \#1605)}},   
}

\bib{macbeath-69}{article}{
   author={Macbeath, A. M.},
   title={Generators of the linear fractional groups},
   conference={
      title={Number Theory},
      address={Proc. Sympos. Pure Math., Vol. XII, Houston, Tex.},
      date={1969},
   },
   book={
      publisher={Amer. Math. Soc.},
      place={Providence, R.I.},
   },
   date={1969},
   pages={14--32},
   review={\MR{0262379 (41 \#6987)}},
}

\bib{macbeath-73}{article}{
   author={Macbeath, A. M.},
   title={Action of automorphisms of a compact Riemann surface on the
first
   homology group},
   journal={Bull. London Math. Soc.},
   volume={5},
   date={1973},
   pages={103--108},
   issn={0024-6093},
   review={\MR{0320301 (47 \#8840)}},
}

\bib{maclachlan-69}{article}{
   author={Maclachlan, C.},
   title={A bound for the number of automorphisms of a compact Riemann
   surface. },
   journal={J. London Math. Soc.},
   volume={44},
   date={1969},
   pages={265--272},
   issn={0024-6107},
   review={\MR{0236378 (38 \#4674)}},
}

\bib{magaard-06}{article}{
  author={Magaard, K.},
  author={V{\"o}lklein, H.},
  title={On Weierstrass points of Hurwitz curves},
  journal={J. Algebra},
  volume={300},
  date={2006},
  number={2},
  pages={647--654},
  issn={0021-8693},
     review={\MR{2228214 (2007a:14036)}},	
}

\bib{rauch-59}{article}{
   author={Rauch, H. E.},
   title={Weierstrass points, branch points, and moduli of Riemann
surfaces},
   journal={Comm. Pure Appl. Math.},
   volume={12},
   date={1959},
   pages={543--560},
   issn={0010-3640},
   review={\MR{0110798 (22 \#1666)}},
}

\bib{rohrlich-82}{article}{
   author={Rohrlich, D. E.},
   title={Some remarks on Weierstrass points},
   conference={
      title={Number theory related to Fermat's last theorem
(Cambridge,
      Mass., 1981)},
   },
   book={
      series={Progr. Math.},
      volume={26},
      publisher={Birkh\"auser Boston},
      place={Mass.},
}   date={1982},
   pages={71--78},
   review={\MR{685289 (84d:14008)}},
   }
   \bib{schoeneberg-51}{article}{
   author={Schoeneberg, Bruno},
   title={\"Uber die Weierstrass-Punkte in den K\"orpern der elliptischen
   Modulfunktionen},
   language={German},
   journal={Abh. Math. Sem. Univ. Hamburg},
   volume={17},
   date={1951},
   pages={104--111},
   issn={0025-5858},
   review={\MR{0044568 (13,439c)}},
}

\bib{singerman-70}{article}{
   author={Singerman, D.},
   title={Subgroups of Fuschian groups and finite permutation groups},
   journal={Bull. London Math. Soc.},
   volume={2},
   date={1970},
   pages={319--323},
   issn={0024-6093},
   review={\MR{0281805 (43 \#7519)}},
}

\bib{singerman-72}{article}{
   author={Singerman, D.},
   title={Finitely maximal Fuchsian groups},
   journal={J. London Math. Soc. (2)},
   volume={6},
   date={1972},
   pages={29--38},
   issn={0024-6107},
   review={\MR{0322165 (48 \#529)}},
}

\bib{singerman-97}{article}{
   author={Singerman, D.},
   author={Watson, P. D.},
   title={Weierstrass points on regular maps},
   journal={Geom. Dedicata},
   volume={66},
   date={1997},
   number={1},
   pages={69--88},
   issn={0046-5755},
   review={\MR{1454933 (98d:30055)}},
}

\bib{singerman-03}{article}{
   author={Singerman, D.},
   author={Syddall, R. I.},
   title={The Riemann surface of a uniform dessin},
   journal={Beitr\"age Algebra Geom.},
   volume={44},
   date={2003},
   number={2},
   pages={413--430},
   issn={0138-4821},
   review={\MR{2017042 (2004k:14053)}},
}

\bib{towse-97}{article}{
   author={Towse, C.},
   title={Weierstrass weights of fixed points of an involution},
   journal={Math. Proc. Cambridge Philos. Soc.},
   volume={122},
   date={1997},
   number={3},
   pages={385--392},
   issn={0305-0041},
   review={\MR{1466643 (98i:14033)}},
}

\bib{weber-05}{article}{
   author={Weber, M.},
   title={Kepler's small stellated dodecahedron as a Riemann surface},
   journal={Pacific J. Math.},
   volume={220},
   date={2005},
   number={1},
   pages={167--182},
   issn={0030-8730},
   review={\MR{2195068 (2006j:30075)}},
}
\end{biblist}
\end{bibdiv}

\end{document}